\renewcommand{\H}{\mathcal{H}}
\newcommand{\N}{\mathbb{N}}
\newcommand{\R}{\mathbb{R}}
\newcommand*{\genbf}[1]{\ifmmode\mathbf{#1}\else\textbf{#1}\fi}
\newcommand{\diam}{\operatorname{diam}}
\newcommand{\gr}{\mathrm{gr}}
\renewcommand{\L}{\mathcal{L}}
\newcommand{\cH}{\mathcal H}
\newcommand{\F}{\mathcal F}
\newcommand{\ch}{\mathfrak h}
\newcommand{\cU}{\mathcal U}
\newcommand{\bH}{\mathbb H}
\newcommand{\sm}{\setminus}
\newcommand{\cG}{\mathcal G}
\newcommand{\res}{\mbox{\LARGE{$\llcorner$}}}
\newcommand{\beq}{\begin{equation}}
\newcommand{\eeq}{\end{equation}}
\newtheorem{theorem}{Theorem}[section]
\newtheorem{lemma}[theorem]{Lemma}
\newtheorem{proposition}[theorem]{Proposition}
\theoremstyle{definition}
\newtheorem{definition}[theorem]{Definition}
\newtheorem{remark}[theorem]{Remark}
\author{Kennedy Obinna Idu}
\address{Department of Mathematics, University of Toronto, 
			Toronto, Ontario M5S 2E4, Canada.}
\email{o.idu@utoronto.ca}
\title{On
the Density Problem in Heisenberg Rectifiability}
\begin{document}
\begingroup
\maketitle
\endgroup

\begin{abstract}
We resolve a problem posed by Mattila, Serapioni and Serra Cassano concerning the role of density assumptions in the characterization of rectifiable sets of low codimension in Heisenberg groups. Specifically, we prove that the positive lower density condition is not required: rectifiability is completely determined by the geometric property of almost everywhere existence of approximate tangent subgroups. This provides a simplified and intrinsic criterion for rectifiable sets in the sub-Riemannian setting and sharpens the analogy with Federer’s classical Euclidean theorem.
\end{abstract}

\vspace{1cm}
 {\footnotesize
 \textbf{MSC (2010):} 28A75 (primary); 43A80, 53C17 (secondary).

 \textbf{Keywords:} Rectifiability, approximate tangent group, Heisenberg groups}.

\section{Introduction}

A central problem in geometric measure theory of Heisenberg groups concerns the characterization of rectifiable sets of low codimension. A fundamental result in this direction, due to Mattila, Serapioni, and Serra Cassano \cite{MSSC10}*{Theorem~3.15}, establishes rectifiability in terms of the existence of suitable approximate tangent subgroups together with a positive lower density assumption. In that work it was explicitly asked whether the density hypothesis is essential. This question has remained open and has motivated much of the subsequent research on rectifiability in non-Euclidean settings (see, for instance, the recent survey \cite{Mat_Surv2023}).

The purpose of this article is to resolve this question in the affirmative by showing that the lower density assumption can, in fact, be removed.

\vspace{5pt}
\noindent
Here $k$ denotes the topological dimension and $k_m$ the corresponding metric dimension. We write $\cG(\bH^n,k)$ for the Grassmannian of $k$-dimensional subgroups (see Definition~\ref{def.grass}). For $V \in \cG(\bH^n,k)$, let $X(p,V,\alpha)$ denote the cone with vertex $p$, axis $V$, and aperture $\alpha$ (see Definition~\ref{Def:Cone}).  
\vspace{5pt}

Our main result states:
\begin{theorem}\label{Thm:Main}
Let $E \subset \bH^n$ be $\H^{k_m}$-measurable with $\H^{k_m}(E) < \infty$. Suppose that for $\H^{k_m}$-a.e. $p \in E$ there exists $V_p \in \cG(\bH^n,k)$ such that for every $0<s<1$,
\begin{equation}\label{eq:TanConeCond}
\lim_{r \to 0^+} \frac{1}{r^{k_m}} \H^{k_m} \left( E \cap B(p,r) \setminus X(p,V_p,s) \right) =0.
\end{equation}
We call such a subgroup $V_p$ an \emph{approximate tangent subgroup} of $E$ at $p$, and write $apTan_{\bH}^k(E,p) = V_p$.
Then $E$ is $(k,\bH)-$rectifiable in the sense of Definition \ref{def:rec}.
\end{theorem}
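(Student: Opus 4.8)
The plan is to reduce the global tangent-cone hypothesis to a \emph{uniform, localized} statement, upgrade that statement to a pointwise cone inclusion on large pieces, and then recognize those pieces as intrinsic Lipschitz graphs. Since $\cG(\bH^n,k)$ is a compact metric space, I would fix a countable dense family $\{V_i\}$ and, for each $i$ and each pair of rationals, split the full-measure set on which \eqref{eq:TanConeCond} holds into the Borel pieces where $apTan_{\bH}^k(E,\cdot)$ lies in a prescribed $\delta$-neighborhood of $V_i$. On each such piece Egorov's theorem makes the convergence in \eqref{eq:TanConeCond} uniform: for every $s,\eps$ there is $r_0>0$ with $\H^{k_m}(E\cap B(p,r)\sm X(p,V_i,s))\le \eps\, r^{k_m}$ for all $p$ in the piece and all $r<r_0$. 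Continuity of the cones in their axis lets me replace the pointwise axis $V_p$ by the fixed $V_i$ at the cost of enlarging the aperture slightly; call the resulting uniform piece $F$.

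On $F$ I aim to prove the pointwise inclusion $F\cap B(p,r_0)\subset X(p,V_i,c\,s)$ for $\H^{k_m}$-a.e.\ $p\in F$; by the intrinsic cone characterization of intrinsic Lipschitz graphs this exhibits the good part of $F$ as such a graph over a complementary subgroup $W_i$ to $V_i$, hence as a rectifiable piece in the sense of Definition~\ref{def:rec}. The mechanism is a contradiction argument resting on a purely metric cone lemma: if $q\notin X(p,V_i,2s)$ then an entire ball $B(p,\delta|p^{-1}q|)$ misses $X(q,V_i,s)$. Feeding this disjointness into the uniform bound centered at $q$ forces $\H^{k_m}\bigl(F\cap B(p,\delta|p^{-1}q|)\bigr)\le C\eps\,|p^{-1}q|^{k_m}$, i.e.\ an arbitrarily small density of $F$ at $p$ at that scale. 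Letting $\eps\to0$ along the Egorov exhaustion produces intrinsic Lipschitz graphs $G_m$ with $\H^{k_m}(E\sm G_m)\to0$, whose union covers $E$ up to a null set and yields $(k,\bH)$-rectifiability.

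The hard part is exactly the removal of the lower-density hypothesis in the second step: the natural contradiction wants a matching lower bound $\H^{k_m}(F\cap B(p,\rho))\gtrsim\rho^{k_m}$, which is precisely what Mattila–Serapioni–Serra Cassano supplied by assumption and which I must not use. What remains free is the \emph{upper} density bound $\Theta^{*k_m}(E,p)\in(0,1]$ for $\H^{k_m}$-a.e.\ $p$, valid for any set of finite $\H^{k_m}$-measure. I expect to bridge the gap by a stopping-time/Vitali covering of the bad set $B=\{p\in F:\text{the pointwise cone inclusion fails}\}$ using the low-density balls produced above, disjointifying and summing the uniform estimate to obtain $\H^{k_m}(B)\le C\eps\,\H^{k_m}(F)$. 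The delicate point—the technical heart of the whole argument—is controlling $\sum_i\rho_i^{k_m}$ for the selected disjoint balls without any lower Ahlfors bound; this is where the upper-density selection of scales must be interlocked with the cone estimate so that only scales carrying definite mass are summed, converting the scale-by-scale density decay into a genuine measure bound on $B$.
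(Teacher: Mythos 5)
There is a genuine gap, and it sits exactly where you flag it. Your strategy is to prove a \emph{pointwise} cone inclusion $F\cap B(p,r_0)\subset X(p,V_i,cs)$ on Egorov pieces and then invoke the cone criterion for intrinsic Lipschitz graphs. But the hypothesis \eqref{eq:TanConeCond} only controls the \emph{measure} of $E$ outside the cone; to rule out the mere \emph{existence} of a point $q\in F\setminus X(p,V_i,2s)$ you need to know that $F$ carries a definite amount of mass near $q$ (or near $p$) at scale $\|p^{-1}q\|$ --- which is precisely the lower density bound that the theorem is supposed to dispense with. Your proposed repair, a Vitali/stopping-time covering of the bad set $B$ in which ``only scales carrying definite mass are summed,'' is not an argument: without a lower Ahlfors bound the sum $\sum_i\rho_i^{k_m}$ over the selected disjoint balls is not controlled by $\H^{k_m}(F)$, and the upper density bound $\Theta^{*k_m}\le 1$ runs in the wrong direction for this purpose. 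You have correctly located the obstruction but not supplied the idea that removes it. A secondary issue: your ``purely metric cone lemma'' is not routine here, because for vertical axes $V^\perp$ the inversion $x\mapsto x^{-1}$ is not an isometry for the distance to $V^\perp$ and exhibits $\tfrac12$-H\"older degradation at small scales; the paper needs a dedicated quantitative inversion estimate (Lemma~\ref{ineq:cone_inversion}) to handle exactly this.

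The paper resolves the difficulty by a structurally different, Besicovitch--Federer route that never attempts a pointwise cone inclusion on $E$ itself. One splits $E$ into a rectifiable part and a purely $(k,\bH)$-unrectifiable part $E_{\mathrm{pu}}$, and proves that $E_{\mathrm{pu}}$ is null via a density estimate for purely unrectifiable sets (Theorem~\ref{thm:density-estimate}, the analogue of Federer's Lemma~3.3.6). The point is that pure unrectifiability is used \emph{positively}: by the geometric lemma (Lemma~\ref{lem:geom}), at $\H^{k_m}$-a.e.\ point $x$ of a purely unrectifiable set the cone $X(x,V^\perp,\bar s)$ must contain other points of the set, which furnishes a companion point $y$ at a definite scale $h(x)>0$; this enables a covering of cylinders by two cones and, after a careful cylinder-packing argument, yields $\Theta^{*k_m}(\H^{k_m}\res E_{\mathrm{pu}},\cdot)\le C(k_m,s_0)\lambda$ with $\lambda$ arbitrarily small. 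The contradiction then comes from the a.e.\ lower bound $2^{-k_m}$ on the \emph{upper} density (Lemma~\ref{lemma:densities}), which holds for any set of finite measure --- no lower density hypothesis enters. If you want to complete your proof, this change of decomposition (rectifiable versus purely unrectifiable, rather than good versus bad points of a single piece) is the missing idea.
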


\begin{remark}
    The converse of Theorem~\ref{Thm:Main} is established in \cite{MSSC10}*{Theorem~3.15}.
\end{remark}

\subsection{Background and review of result}
Theorem~\ref{Thm:Main} fits into a line of characterizations of rectifiable sets in terms of an intrinsic geometric criterion. 
In the Euclidean setting, \cite{Mat95}*{Theorem~15.19} establishes that rectifiability can be characterized by the existence of \textit{approximate tangent planes}. 
In Heisenberg groups, an analogue for low-dimensional subsets was obtained in \cite{MSSC10}*{Theorem~3.14}, where rectifiability is described in terms of the existence of suitable approximate tangent subgroups. 
More recently, in \cite{IMM20}*{Theorem~1.1}, a version was proved in the general context of homogeneous spaces.

Two main strategies are typically employed. The first is a structure-theoretic approach, based on a geometric lemma asserting that sets locally contained in cones are rectifiable, combined with a density estimate for purely unrectifiable sets. Under the approximate tangent plane condition, this argument shows that the purely unrectifiable component is negligible. This strategy is used in several places including \cite{IMM20}*{Theorem~1.1}, \cite{Mat95}*{Theorem~15.19} and \cite{Mat2022}*{Theorem~3.8} with the density estimate -- modeled on Federer's  \cite{Fed69}*{Lemma~3.3.6} -- as its critical ingredient. Its proof exploits the linear structure and Lipschitz continuity of horizontal projections.

The second strategy is a standard  decomposition argument using density conditions. One first establishes that the lower density is positive almost everywhere on a set with approximate tangent planes. This approach is adopted in \cite{MSSC10}*{Theorem~3.14} where they take advantage of the identification of horizontal subgroups with Euclidean subspaces, the linearity of horizontal projections and the decompositions of dyadic cubes.
\subsection{Motivations and strategy}
The proof of Theorem~\ref{Thm:Main} follows a structure-theoretic approach, combining a geometric lemma with a density estimate. While these elements are standard in low-dimensional or Euclidean settings, their direct application fails in the present context.

Standard arguments rely critically on the Lipschitz and homomorphism properties of horizontal projections, which allow sets contained in cones to be covered by pieces of Lipschitz graphs. In the current setting, these properties are lost for vertical projections, rendering the classical geometric lemma inapplicable. Similarly, standard density estimates for purely unrectifiable sets depend on three components: a covering argument for horizontal projections of an essential set, a $k_m$-dimensional Hausdorff measure estimate over the covering indices, and the ability to cover a cylinder with two cones. Each of these steps fails when vertical projections are used due to non-adaptability of the covering argument, dimension distortion, and the collapse of computations involving intrinsic cylinders defined using vertical projections.

To overcome these challenges, in the geometric lemma we use the cone characterization of intrinsic Lipschitz graphs (see, e.g., \cites{FSSC6, FranchiSerapioni2016IntrLip}) together with their recent identification with $\bH$-regular surfaces in \cite{Vittone2022}. This allows us to cover sets that satisfy the cone condition by pieces of $\bH$-regular surfaces. For the density estimate, avoiding vertical projections requires addressing pathologies of the left-invariant metric, including the non-isometry of inversion and $\frac{1}{2}$-Hölder behavior at small scales. Using small-scale analysis of sets and geometric distance functionals, together with a control estimate for distances from $k$-subgroups under inversion, we can cover cylinders with two cones and obtain the necessary Hausdorff measure estimates. This technique—controlling inversions within cones and cylinders— uses the symplectic structure of Heisenberg groups and is central to our proof.

\subsection{Structure of paper}
The structure of the paper is as follows. In Section~\ref{sec:preliminaries} we recall some basic definitions and facts, including the geometry of the Heisenberg group, intrinsic Lipschitz graphs, rectifiable sets, and approximate tangent subgroups. Section~\ref{sec:density} contains the technical result about a density estimate on purely unrectifiable sets needed to overcome the absence of a lower density hypothesis. The main theorem is proved in Section~\ref{sec:main}, where we establish that the existence of approximate tangent subgroups alone suffices to guarantee rectifiability. Finally, in Section~\ref{sec:applications} we present a consequence of the result in connection with higher-order rectifiability in Heisenberg groups.

\addtocontents{toc}{\protect\setcounter{tocdepth}{1}}
\subsection*{Acknowledgements.} 
The author is grateful to Francesco Paolo Maiale for many stimulating discussions, to Valentino Magnani for introducing him to the problem and motivation, and to Pertti Mattila for comments on an earlier draft that prompted a complete revision of techniques. He also thanks Almut Burchard, Antoine Julia, and Davide Vittone for valuable feedback on earlier drafts. This work began during his tenure as an AIMS–Fields–Perimeter postdoctoral fellow at the Fields Institute, Toronto, whose support and stimulating environment are gratefully acknowledged, and was completed in part during a visit to the ICTP–East African Institute for Fundamental Research at the University of Rwanda.
\addtocontents{toc}{\protect\setcounter{tocdepth}{2}}

\section{Preliminaries}\label{sec:preliminaries}
\subsection{The Heisenberg groups}
The Heisenberg group $\bH^n$ is a simply-connected Lie group whose Lie algebra $\ch^n$ has a step two stratification:
$$
\ch^{n}=\ch_{1}^{} \oplus \ch_{2}^{}
$$
where $\ch_{1}=\operatorname{span}\left\{X_{1}, \ldots, X_{n}, Y_{1}, \ldots, Y_{n}\right\}$ and $\ch_{2}=\operatorname{span}\{T\}$ with only nontrivial commutator relation
$\left[X_{j}, Y_{j}\right]=-4T$, for all $1\le j\le n$. The vector fields $X_{1},\ldots,X_{n},Y_{1},\ldots,Y_{n}$ define the \emph{horizontal vector bundle} $\mathrm{H\bH}^{n}$, a subbundle of the tangent bundle $\mathrm{T\bH}^{n}$.  Via exponential coordinates $\bH^n$ can be identified with $\R^{2n+1}$, and the group law by the Baker-Campbell-Hausdorff formula can be written as
$$
p \cdot q:=\left(p^{\prime}+q^{\prime}, p_{2 n+1}+q_{2 n+1}+2 \omega\left(p',q'\right)\right),
$$
where $p':=\left(p_{1}, \cdots, p_{2 n}\right)$ and $\omega (p',q'):=\sum_{i=1}^{n}\left(p_{i+n} q_{i}-p_{i} q_{i+n}\right)$. The inverse of $p$ is $p^{-1}:=\left(-p^{\prime},-p_{2 n+1}\right)$ and $e=0$ is the identity of $\mathbb{H}^{n}$. The centre of $\mathbb{H}^{n}$ is the subgroup $\mathbb{T}:=\left\{p=\left(0, \ldots, 0, p_{2 n+1}\right)\right\}$. For any $q \in \mathbb{H}^{n}$ and $r>0,$ we denote as $\tau_{q}: \mathbb{H}^{n} \rightarrow \mathbb{H}^{n}$ the left translation $p \mapsto q \cdot p=\tau_{q}(p)$ and as $\delta_{r}: \mathbb{H}^{n} \rightarrow \mathbb{H}^{n}$ the dilation
$$
p \mapsto\left(r p^{\prime}, r^{2} p_{2 n+1}\right)=\delta_{r} p.
$$
We denote, for all $p, q \in \mathbb{H}^{n}$
$$
\|p\|:=d(p, e):=\max \left\{\left\|p'\right\|_{\mathbb{R}^{2 n}},\left|p_{2 n+1}\right|^{1 / 2}\right\}
$$
and
\begin{equation}\label{Def:HMetric}
d(p, q)=d\left(q^{-1} \cdot p, e\right)=\left\|q^{-1} \cdot p\right\|
\end{equation}
Then, for all $p, q, z \in \mathbb{H}^{n}$ and for all $r>0$
$$
d(z \cdot p, z \cdot q)=d(p, q) \quad \text { and } \quad d\left(\delta_{r} p, \delta_{r} q\right)=r d(p, q)
$$

The Lie algebra $\mathfrak{h}^n$ is endowed with the scalar product $\langle\cdot, \cdot\rangle,$ that makes $X_{1}, \ldots, Y_{n}, T$ orthonormal, and induces the norm $\|\cdot\|$.

% We denote by $\pi: \mathbb{H}^{n} \rightarrow \mathfrak{h}_{1}$ the horizontal projection map defined as
% \begin{equation}\label{Def:pi}
% \pi(p):=\sum_{j=1}^{n}\left(p_{j} X_{j}+p_{n+j} Y_{j}\right) 
% \end{equation}
% For any $\nu\in\mathfrak{h}_1$, we define the $1$-codimensional normal subgroup orthogonal to $\nu$ by
% \begin{equation}\label{Def:NormalSubg}
% \N(\nu):=\left\lbrace p\in\bH^n : \langle\nu,\pi(p)\rangle =0\right\rbrace.
% \end{equation}

\subsection{The intrinsic Grassmannian}

A subgroup $S \subset \bH^n$ is a {\em homogeneous subgroup} if $\delta_r(S) \subseteq S$ for all $r > 0$.
A homogeneous subgroup $S$ is either {\em horizontal}, i.e. contained in $\mathrm{exp} (\mathfrak{h}_1)$, or {\em vertical}, i.e., containing the center $\mathbb{T}$ of $\bH^n$. We denote
\[
d(p,S) := \inf_{s \in S} d(p,s) = \inf_{s \in S} \| p^{-1}s\|.
\] 
In $\bH^n$, vertical subgroups are normal but horizontal subgroups are abelian.

\begin{definition}
Homogeneous subgroups $V$ and $W$ of $\bH^n$ are {\em complementary} if $ V \cap W = \{0\}$ and $\bH^n = W \cdot V$. If $W$ is normal then $\bH^n$ is a {\em semidirect} product, $\bH^n = W \rtimes V$. We define projections $\pi_V : \bH^n \to V$ and $\pi_W : \bH^n \to W$ such that
\[
\mathrm{id}_{\bH^n} = \pi_W \cdot \pi_V.
\]
The following algebraic equalities hold:
\[ \begin{aligned}
& \pi_W(p^{-1}) = \pi_V^{-1}(p) \cdot \pi_W^{-1}(p) \cdot \pi_V(p), \quad \pi_V(p^{-1}) = \pi_V^{-1}(p), \\
& \pi_W(\delta_\lambda p) = \delta_\lambda \pi_W(p), \quad \pi_V(\delta_\lambda p) = \delta_\lambda \pi_V(p), \\
& \pi_W(p \cdot q) = \pi_W(p) \cdot \pi_V(p) \cdot \pi_W(q) \cdot \pi_V^{-1}(p), \quad \pi_V(p\cdot q) = \pi_V(p) \cdot \pi_V(q).
\end{aligned}\]
\end{definition}

\begin{proposition}\cite{MSSC10} \label{prop:proj-estimates}
If $\bH^n = W \rtimes V$, then $\pi_V$ and $\pi_W$ are continuous, $\pi_V$ is a $h$-homomorphism and there is $c:=c(V,W)> 0$ such that for all $p \in \bH^n$
\[
\begin{aligned}
& c \|\pi_V(p)\| \leq d(p, W) \leq \|\pi_V(p)\|,
\\ & c \|\pi_V^{-1}(p) \cdot \pi_{W}(p) \cdot \pi_V(p)\| \leq d(p, V) \leq \|\pi_V^{-1}(p) \cdot \pi_{W}(p) \cdot \pi_V(p)\|. \end{aligned}
\]
\end{proposition}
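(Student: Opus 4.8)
The plan is to treat the four assertions separately, handling continuity and the $h$-homomorphism property by direct appeal to the explicit group law and the algebraic identities recorded above, and then splitting each metric estimate into an elementary upper bound and a homogeneity/compactness lower bound.

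For continuity I would observe that the splitting map $\beta\colon W\times V\to\bH^n$, $(w,v)\mapsto w\cdot v$, is a continuous bijection by complementarity; since the group multiplication is a polynomial diffeomorphism in exponential coordinates and $V,W$ are closed graded (hence embedded) subgroups, $\beta$ is a diffeomorphism onto $\bH^n$, so its inverse $p\mapsto(\pi_W(p),\pi_V(p))$ is smooth, and in particular $\pi_V,\pi_W$ are continuous. The $h$-homomorphism property of $\pi_V$ is then immediate from the two identities $\pi_V(p\cdot q)=\pi_V(p)\cdot\pi_V(q)$ and $\pi_V(\delta_\lambda p)=\delta_\lambda\pi_V(p)$ stated above: a group homomorphism commuting with dilations is precisely an $h$-homomorphism. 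For the upper bounds I would simply exhibit competitors in the respective infima. Writing $p=\pi_W(p)\cdot\pi_V(p)$, the choice $w=\pi_W(p)\in W$ gives $p^{-1}w=\pi_V(p)^{-1}$, so using the inversion symmetry $\|q^{-1}\|=\|q\|$ of the homogeneous norm one gets $d(p,W)\le\|\pi_V(p)^{-1}\|=\|\pi_V(p)\|$; the choice $v=\pi_V(p)\in V$ gives $p^{-1}v=\pi_V(p)^{-1}\pi_W(p)^{-1}\pi_V(p)$, whose norm equals $\|\pi_V(p)^{-1}\pi_W(p)\pi_V(p)\|$ again by inversion symmetry, yielding the upper bound for $d(p,V)$.

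The substance of the proposition is the two lower bounds, and here the key is to reduce both to a single comparison lemma. Using left $W$-invariance of $d(\cdot,W)$ together with $\pi_V(w\cdot p)=\pi_V(p)$ for $w\in W$, the choice $w=\pi_W(p)^{-1}$ shows $d(p,W)=d(\pi_V(p),W)$, so that $d(\cdot,W)$ factors through $\pi_V$. Dually, a short computation combining the inversion symmetry with the normality of $W$ rewrites $d(p,V)=\inf_{v\in V}\|p^{-1}v\|$ as the distance $d(\tilde w,V)$ of the conjugate $\tilde w:=\pi_V(p)^{-1}\pi_W(p)\pi_V(p)\in W$ to $V$. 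It then suffices to prove the following: for a closed homogeneous subgroup $U$ and its complement $U'$ there is $c>0$ with $c\|u\|\le d(u,U)\le\|u\|$ for all $u\in U'$. The function $u\mapsto d(u,U)$ on $U'$ is continuous, positively homogeneous of degree one under $\delta_\lambda$, and strictly positive away from the identity because $U\cap U'=\{0\}$ and $U$ is closed; restricting to the sphere $\{u\in U':\|u\|=1\}$, which is compact since $U'$ is a closed graded subspace in exponential coordinates, the extreme value theorem furnishes the positive constant $c$, and homogeneity propagates the estimate to all scales. Applying this with $(U,U')=(W,V)$ and then $(U,U')=(V,W)$ yields the two lower bounds with $c=c(V,W)$.

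I expect the main obstacle to be the dual reduction $d(p,V)=d(\tilde w,V)$: unlike the $W$-case it cannot be obtained from a single left translation, and requires combining the inversion symmetry $\|q^{-1}\|=\|q\|$, the normality of $W$, and a left-coset manipulation of the infimum defining $d(\cdot,V)$. A secondary point demanding care is the justification that the unit spheres of $V$ and $W$ are genuinely compact and that $d(\cdot,U)$ remains bounded below on them, which is exactly where the hypotheses $V\cap W=\{0\}$ and the closedness of the subgroups enter.
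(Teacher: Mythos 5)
The paper does not prove this proposition --- it is imported verbatim from \cite{MSSC10} --- so there is no in-paper argument to compare against; your proof is correct and is essentially the standard one from that reference: continuity via the diffeomorphism $(w,v)\mapsto w\cdot v$, upper bounds by exhibiting the competitors $\pi_W(p)$ and $\pi_V(p)$, and lower bounds by reducing to the comparison $c\|u\|\le d(u,U)\le\|u\|$ for $u$ in the complementary subgroup, which follows from homogeneity of $d(\cdot,U)$ under $\delta_\lambda$ together with compactness of the unit sphere of the complement. One remark on the step you flag as the main obstacle: the dual reduction $d(p,V)=d(\tilde w,V)$ with $\tilde w:=\pi_V(p)^{-1}\pi_W(p)\pi_V(p)$ is in fact a one-line computation needing only normality of $W$, since $p^{-1}v=\pi_V(p)^{-1}\pi_W(p)^{-1}\pi_V(p)\cdot\bigl(\pi_V(p)^{-1}v\bigr)=\tilde w^{-1}\cdot\bigl(\pi_V(p)^{-1}v\bigr)$, where the first factor is a fixed element of $W$ and the second ranges over all of $V$ as $v$ does; no inversion symmetry is required there.
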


We now introduce the notion of {\em intrinsic Grassmannian} as in \cite{MSSC10}.

\begin{definition}\label{def.grass}
A $k$-subgroup $V$ belongs to the $k$-Grassmannian $\cG(\bH^n,k)$ if there exists a $(2n+1-k)$-subgroup $W$ such that $\bH^n = W \cdot V$. Moreover, the union
\[
\cG(\bH^n) := \bigcup_{k = 0}^{2n+1} \cG(\bH^n,k)
\]
is often referred to as the {\em intrinsic Grassmannian} of $\bH^n$.
\end{definition}

\begin{proposition}\label{prop.str}
The trivial subgroups $\{e\}$ and $\bH^n$ are the unique elements of $\cG(\bH^n,0)$ and $\cG(\bH^n,2n+1)$ respectively and \mbox{}
\begin{enumerate}
\item for $1 \le k \le n$, $\cG(\bH^n,k)$ coincides with the set of all horizontal $k$-subgroups;
\item for $n+1 \le k \le 2n$, $\cG(\bH^n,k)$ coincides with the set of all vertical $k$-subgroups.
\end{enumerate}
Furthermore, any vertical subgroup $W$ with linear dimension in $\{1,\ldots,n\}$ is not an element of the intrinsic Grassmannian of $\bH^n$.
\end{proposition}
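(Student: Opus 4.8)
The plan is to reduce everything to the correspondence between homogeneous $k$-subgroups and subspaces of the first layer $\ch_1 \cong (\R^{2n},\omega)$. Recall that a homogeneous subgroup is $\exp$ of a dilation-invariant subalgebra $\mathfrak{s}$, and since the dilations act with distinct weights on $\ch_1$ and $\ch_2$ one has $\mathfrak{s}=(\mathfrak{s}\cap\ch_1)\oplus(\mathfrak{s}\cap\ch_2)$. Hence a vertical subgroup is $\exp(\mathfrak{w}\oplus\ch_2)$ for an \emph{arbitrary} subspace $\mathfrak{w}\subseteq\ch_1$ (automatically a subalgebra, since $[\mathfrak{w},\mathfrak{w}]\subseteq\ch_2$), while a horizontal subgroup is $\exp(\mathfrak{v})$ for an $\omega$-isotropic $\mathfrak{v}\subseteq\ch_1$ (the group law forces $\omega|_{\mathfrak{v}}=0$). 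In particular horizontal subgroups have linear dimension at most $n$, the maximal dimension of an isotropic subspace. The trivial cases are immediate: the only subgroup of dimension $0$ is $\{e\}$ and of dimension $2n+1$ is $\bH^n$, and these are complementary to one another, giving $\cG(\bH^n,0)=\{\{e\}\}$ and $\cG(\bH^n,2n+1)=\{\bH^n\}$.

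Next I would settle the dichotomy and the ``Furthermore'' clause by a pure dimension count. If a vertical $k$-subgroup $V$ has a complementary subgroup $W$, then $W$ cannot be vertical (two vertical subgroups both contain the center $\T$, contradicting $V\cap W=\{e\}$), so $W$ is horizontal and thus $\dim W\le n$; since $\dim W=2n+1-k$, this forces $k\ge n+1$. Equivalently, a vertical subgroup of dimension $k\in\{1,\dots,n\}$ admits no complement, which is precisely the last assertion. Symmetrically, a horizontal $k$-subgroup has $k\le n$; hence for $n+1\le k\le 2n$ every $k$-subgroup is vertical, and for $1\le k\le n$ every element of $\cG(\bH^n,k)$ must be horizontal.

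It then remains to exhibit complements in the two admissible regimes. For a horizontal $V=\exp(\mathfrak{v})$ with $\dim\mathfrak{v}=k\le n$, I would choose any linear complement $\mathfrak{w}$ of $\mathfrak{v}$ in $\ch_1$ and set $W=\exp(\mathfrak{w}\oplus\ch_2)$; a direct computation with the group law gives $V\cap W=\{e\}$ and $\bH^n=W\cdot V$ (split $z'$ along $\mathfrak{w}\oplus\mathfrak{v}$ and solve for the two factors), so $V\in\cG(\bH^n,k)$. For a vertical $V=\exp(\mathfrak{v}_0\oplus\ch_2)$ with $\dim\mathfrak{v}_0=k-1\ge n$, the complement must be horizontal, hence of the form $W=\exp(\mathfrak{w})$ with $\mathfrak{w}$ isotropic and $\mathfrak{w}\oplus\mathfrak{v}_0=\ch_1$; once such $\mathfrak{w}$ is found the same group-law computation yields $V\cap W=\{e\}$ and $\bH^n=W\cdot V$.

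The main obstacle is therefore the symplectic-linear-algebra input for the vertical case: every subspace $U\subseteq(\R^{2n},\omega)$ with $\dim U\ge n$ admits an $\omega$-isotropic complement of dimension $2n-\dim U$. I would prove this by putting $U$ into symplectic normal form, choosing a symplectic basis $\{e_i,f_i\}$ so that $U$ is spanned by the symplectic pairs $(e_i,f_i)$ for $1\le i\le a$, the radical vectors $e_{a+1},\dots,e_{a+b}$ (with partners $f_{a+1},\dots,f_{a+b}$ outside $U$), and with the remaining $m=n-a-b$ pairs entirely outside $U$. The hypothesis $\dim U=2a+b\ge n$ is equivalent to $m\le a$, which lets me assign to each exterior index $j$ a distinct symplectic index $\sigma(j)\le a$ and form $u_j=e_j+f_{\sigma(j)}$, $v_j=f_j+e_{\sigma(j)}$; together with $f_{a+1},\dots,f_{a+b}$ these span an isotropic subspace complementary to $U$, as a short check of the pairings $\omega(\cdot,\cdot)$ confirms. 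This explicit construction, where $\dim U\ge n$ enters in an essential way, is the crux; the remaining verifications are routine.
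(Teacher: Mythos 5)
The paper states this proposition without proof (it is imported from Mattila--Serapioni--Serra Cassano), so there is no in-paper argument to compare against; your proposal must stand on its own, and it does. Your reduction to the first layer, the identification of horizontal subgroups with $\omega$-isotropic subspaces (hence of dimension at most $n$), and the resulting dimension count for the dichotomy and the ``Furthermore'' clause are all correct, and I verified the key symplectic lemma: with $U$ in the normal form you describe, the vectors $u_j=e_j+f_{\sigma(j)}$, $v_j=f_j+e_{\sigma(j)}$ together with $f_{a+1},\dots,f_{a+b}$ do span an isotropic subspace of dimension $b+2m=2n-\dim U$ meeting $U$ trivially, with the hypothesis $\dim U\ge n$ entering exactly through the injectivity of $\sigma$. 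One pedantic remark: Definition~\ref{def.grass} literally asks only for $\bH^n=W\cdot V$ with $\dim W=2n+1-k$, not for $V\cap W=\{e\}$, so for the ``Furthermore'' clause you should also rule out a \emph{vertical} $W$ with $W\cdot V=\bH^n$; this is immediate since the first-layer projections of two vertical subgroups of dimensions $k$ and $2n+1-k$ sum to a subspace of dimension at most $2n-1<2n$. With that one-line addition the argument is complete.
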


\begin{remark} 
The Grassmannian $\cG(\bH^n)$ is a subset of the Euclidean analogue and has the same topology. Moreover, $\cG(\bH^n,k)$ is compact with respect to the metric
\[
\rho(S_1, S_2):= \max_{\|x \|=1} d\left( \pi_{S_1}(x),\pi_{S_2}(x)\right).
\]
\end{remark}

\begin{lemma}\label{Eq:rho}
Let $1\le k\le 2n+1$. For any $S_1,S_2\in \cG(\bH^n,k)$,
\[
\rho(S_1, S_2)=\rho(S_1^\perp, S_2^\perp).
\]
\end{lemma}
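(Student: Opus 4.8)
The plan is to prove the stronger pointwise identity
\[
d\big(\pi_{S_1}(x),\pi_{S_2}(x)\big) = d\big(\pi_{S_1^\perp}(x),\pi_{S_2^\perp}(x)\big) \qquad \text{for every } x\in\bH^n,
\]
where for a subgroup $S$ the projection $\pi_S$ is the one associated with the orthogonal splitting $\bH^n = S^\perp\cdot S$ (here $S^\perp$ is the orthogonal complement in $\ch^n$, which by Proposition~\ref{prop.str} is again a complementary subgroup, an element of $\cG(\bH^n,2n+1-k)$). Taking the maximum over $\{\|x\|=1\}$ of both sides then gives $\rho(S_1,S_2)=\rho(S_1^\perp,S_2^\perp)$ immediately, since both maxima range over the same sphere. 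Moreover, since orthogonal complementation is an involution interchanging $\cG(\bH^n,k)$ and $\cG(\bH^n,2n+1-k)$ and the asserted identity is symmetric under $S\mapsto S^\perp$, it suffices to treat the range $1\le k\le n$, in which $S_1,S_2$ are horizontal subgroups by Proposition~\ref{prop.str}(1); the case $k=2n+1$ is trivial, both sides vanishing.

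So I would fix $x$ and abbreviate $u=\pi_{S_1}(x)$, $v=\pi_{S_2}(x)$, which are horizontal points because $S_1,S_2$ are horizontal. The defining relation $x=\pi_{S^\perp}(x)\cdot\pi_S(x)$ for the splitting $\bH^n=S^\perp\cdot S$ yields $\pi_{S_i^\perp}(x)=x\cdot\pi_{S_i}(x)^{-1}$, whence
\[
\pi_{S_2^\perp}(x)^{-1}\cdot\pi_{S_1^\perp}(x)=\big(x\cdot v^{-1}\big)^{-1}\cdot\big(x\cdot u^{-1}\big)=v\cdot x^{-1}\cdot x\cdot u^{-1}=v\cdot u^{-1}.
\]
Consequently $d\big(\pi_{S_1^\perp}(x),\pi_{S_2^\perp}(x)\big)=\|v\cdot u^{-1}\|$, while by definition $d\big(\pi_{S_1}(x),\pi_{S_2}(x)\big)=\|v^{-1}\cdot u\|$. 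Thus everything reduces to the single norm identity $\|v^{-1}\cdot u\|=\|v\cdot u^{-1}\|$ for horizontal $u=(u',0)$, $v=(v',0)$.

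This last step is where horizontality is essential. A direct application of the group law gives $v^{-1}\cdot u=(u'-v',\,-2\omega(v',u'))$ and $v\cdot u^{-1}=(v'-u',\,-2\omega(v',u'))$; since $\|u'-v'\|_{\R^{2n}}=\|v'-u'\|_{\R^{2n}}$ and the vertical components agree, both norms equal $\max\{\|u'-v'\|_{\R^{2n}},\,|2\omega(v',u')|^{1/2}\}$, which proves the identity. Chaining the three displays gives the pointwise equality, and passing to suprema over $\{\|x\|=1\}$ completes the argument.

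The only genuinely delicate point is the norm identity $\|v^{-1}\cdot u\|=\|v\cdot u^{-1}\|$: it fails for general elements, precisely because inversion is not an isometry of $(\bH^n,d)$ (the pathology flagged in the introduction), and it is rescued only by the fact that $\pi_{S_i}(x)$ lands in a horizontal subgroup, so that inversion merely flips the sign of the horizontal part while leaving the magnitude of the symplectic term unchanged. This is exactly what forces the preliminary reduction to the horizontal range $1\le k\le n$ before the involution $S\mapsto S^\perp$ is invoked to recover $n+1\le k\le 2n$.
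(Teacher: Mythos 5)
Your proof is correct, and it is organized differently from the paper's. The paper proves, by inserting $p\cdot p^{-1}$ and using the projection--inversion identity $\pi_V(p^{-1})=\pi_V(p)^{-1}$, that $\|\pi_{S_1}(p)^{-1}\pi_{S_2}(p)\|=\|\pi_{S_1^\perp}(p^{-1})^{-1}\pi_{S_2^\perp}(p^{-1})\|$ --- an identity relating the value at $p$ to the value at $p^{-1}$ --- and then reindexes the maximum using $\|p\|=\|p^{-1}\|$; it runs this computation twice, once for the horizontal range and once for the vertical range. You instead prove the genuinely pointwise (and strictly stronger) identity $d(\pi_{S_1}(x),\pi_{S_2}(x))=d(\pi_{S_1^\perp}(x),\pi_{S_2^\perp}(x))$ at the same point $x$, which after the cancellation $\pi_{S_2^\perp}(x)^{-1}\pi_{S_1^\perp}(x)=v\cdot u^{-1}$ reduces everything to the single norm identity $\|v^{-1}u\|=\|vu^{-1}\|$ for horizontal $u,v$; you then dispose of the vertical range by the involution $S\mapsto S^\perp$ rather than by a second computation. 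What your route buys is transparency: the coordinate computation $(u'-v',-2\omega(v',u'))$ versus $(v'-u',-2\omega(v',u'))$ isolates exactly where horizontality is indispensable (the identity $\|v^{-1}u\|=\|vu^{-1}\|$ does fail for general group elements, e.g.\ $u=(1,0,1)$, $v=(0,1,0)$ in $\bH^1$), whereas the paper's manipulation hides this behind the reindexing $p\mapsto p^{-1}$. The duality reduction is legitimate since $(S^\perp)^\perp=S$ and the projections appearing in $\rho(S_1^\perp,S_2^\perp)$ for vertical $S_i$ are exactly those of the splitting used in the horizontal case applied to $T_i=S_i^\perp$. Both arguments are elementary and rest on the same algebraic identities for $\pi_V,\pi_W$; yours is arguably the cleaner write-up.
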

\begin{proof} 
The proof is an elementary exercise, we write for completeness.\\
$(i)$ Case $1\le k\le n$: Let $S_1,S_2\in \cG(\bH^n,k)$
\begin{align*}
\rho(S_1,S_2)&=\max_{\|p\|=1}\|\pi_{S_1}^{-1}(p)\cdot\pi_{S_2}(p)\|=\max_{\|p\|=1}\|\pi_{S_1}(p)^{-1}(p^{-1})^{-1}p^{-1}\pi_{S_2}(p)\|\\
&=\max_{\|p\|=1}\|\pi_{S_1}(p^{-1})\pi_{S_1}^{-1}(p^{-1})\pi_{S_1^\perp}^{-1}(p^{-1})\pi_{S_2^\perp}(p^{-1})\pi_{S_2}(p^{-1})\pi_{S_2}^{-1}(p^{-1})\|\\
&=\max_{\|p^{-1}\|=1}\|\pi_{S_1^\perp}^{-1}(p^{-1})\cdot\pi_{S_2^\perp}(p^{-1})\|=\rho(S_1^\perp,S_2^\perp).
\end{align*}

\noindent
$(ii)$ Case $n+1\le k\le 2n+1$: Let $S_1,S_2\in \cG(\bH^n,k)$
\begin{align*}
\rho(S_1,S_2)&=\max_{\|p\|=1}\|\pi_{S_1}(p)^{-1}\pi_{S_2}(p)\|=\max_{\|p\|=1}\|\pi_{S_1}(p)^{-1}pp^{-1}\pi_{S_2}(p)\|\\
&=\max_{\|p\|=1}\|\pi_{S_1^\perp}(p)\cdot\pi_{S_2^\perp}(p)^{-1}\|\\
&=\max_{\|p^{-1}\|=1}\|\pi_{S_1^\perp}^{-1}(p^{-1})\cdot\pi_{S_2^\perp}(p^{-1})\|=\rho(S_1^\perp,S_2^\perp).\qedhere
\end{align*}
\end{proof}

\begin{definition}[Intrinsic cones]\label{Def:Cone}
Let $S$ be a $k$-subgroup of $\bH^n$ and $\alpha\in (0,1)$. The intrinsic cone $X(p_0,S,\alpha)$ centered at $p_0\in \bH^n$, axis $S$ and aperture $\alpha$ is given by
\[
X(p_0,S,\alpha):=\left\{p\in \bH^n:d(p_0^{-1}\cdot p,S)\le \alpha d(p,p_0)\right\}.
\]
We set $X(p_0,r,S,\alpha)=X(p_0,S,\alpha)\cap B(p_0,r)$ for any $r>0$. We further denote the $\rho$-neighbourhood of the $k$-subgroup $S$ by
\[
N(S,\rho)=\{p\in \bH^n: d(p,S)\le \rho\}.
\]
\end{definition}

\subsection{Intrinsic Lipschitz graphs}
We fix a splitting $\bH^n=W\cdot V$ in terms of homogeneous complementary subgroups $W,V$.

Given $A\subset W$ and a map $\phi:A\to V$, the intrinsic graph of $\phi$ is the set
\[
\gr_\phi:=\{w\phi(w):w\in A\}\subset\bH^n.
\]

 \begin{definition}\label{def:grafLip}
Let $A\subset W$; we say that a map $\phi:A\to V$ is {\em intrinsic Lipschitz continuous} if there exists $\alpha>0$ such that
\begin{equation}\label{eq:defLip}
\forall\:p\in\gr_\phi\qquad \gr_\phi\cap X(p,V,\alpha)=\{p\}.
\end{equation}
where $X(p,V,\alpha)$ denotes the cone with vertex $p$, axis $V$, and aperture $\alpha$.  
The \emph{intrinsic Lipschitz constant} of $\phi$ is
\[
\mathrm{Lip}_H(\phi) := \inf \left\{ \alpha > 0 \, : \, \eqref{eq:defLip} \text{ holds} \right\}.
\]
\end{definition}

\subsection{Hausdorff measure and densities}
We define the {\em $k$-dimensional Hausdorff measure} $\cH^k$ of a set $E\subset \bH^n$ by 
\(
\cH_d^k(E):=\sup_{\delta>0}\cH_{\delta}^k(E),
\)
where 
$$\cH_{\delta}^k(E)=\inf \left\{ \sum_i 2^{-k}\diam(E_i)^k: E\subset \bigcup_i E_i, \diam(E_i)\le \delta\right\}.$$
Given a $\cH^k$-measurable subset $E \subset \bH^n$ we define the corresponding upper and lower $k$-densities of $E$ at $p\in \bH^n$ as follows:\[
\Theta^{\ast k}(E, p)= \limsup_{r \to 0} \frac{\cH^k(E\cap B(p, r))}{r^k} \quad\text{and} \quad \Theta_\ast^k(E, p)= \liminf_{r \to 0} \frac{\cH^k(E\cap B(p, r))}{r^k}.
\]
\noindent
We recall the standard density estimates (see \cite{Fed69}*{Section~2.10.19} or \cite{Mat95}*{Theorem~6.2}:
\begin{lemma} \label{lemma:densities}
Let $E \subset \bH^n$ be $\cH^k$-measurable with $\cH^k(E)<+\infty$. Then 
\begin{enumerate}
\item For $\cH^k$-a.e. $p \in E$, it turns out that $2^{-k} \le \Theta^{\ast k}(E, p) \le 1$.
\item For $\cH^k$-a.e. $p \in\bH^n\sm E$, it turns out that $\Theta^{\ast k}(E, p) = 0$.
\end{enumerate}
\end{lemma}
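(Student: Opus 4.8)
The plan is to argue entirely at the level of the finite Borel regular measure $\mu := \cH^k \res E$. Since $\cH^k(E)<\infty$ and $\bH^n$ is a complete separable metric space, $\mu$ is a finite Radon measure, hence outer regular by open sets; moreover $p \mapsto \Theta^{\ast k}(E,p)$ is a Borel function, so each level set appearing below is $\cH^k$-measurable. The three assertions then reduce to showing that three such level sets are $\mu$-null, and I would treat them by covering arguments of increasing delicacy.

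For the lower bound in (1), fix $t<2^{-k}$ and set $A := \{p\in E : \Theta^{\ast k}(E,p)<t\}$; I decompose $A=\bigcup_m A_m$ with $A_m := \{p\in A : \cH^k(E\cap B(p,\rho)) < t\rho^k \text{ for all } 0<\rho<1/m\}$ in order to make the density bound uniform. Fixing $m$ and $\delta<1/m$, I choose a cover $\{E_j\}$ of $A_m$ with $\diam E_j \le \delta$ and $\sum_j 2^{-k}\diam(E_j)^k \le \cH^k(A_m)+\eps$, pick $p_j\in A_m\cap E_j$, and use $E_j \subseteq \overline B(p_j,\diam E_j)$ together with the uniform density bound (letting the radius decrease to $\diam E_j$) to get $\cH^k(E\cap E_j) \le t\,\diam(E_j)^k$. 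Summing gives $\cH^k(A_m) \le \sum_j \cH^k(E\cap E_j) \le 2^k t\,(\cH^k(A_m)+\eps)$, and since $2^k t<1$ this forces $\cH^k(A_m)=0$; letting $m\to\infty$ yields $\cH^k(A)=0$. For assertion (2), given $t>0$ let $D := \{p\in\bH^n\sm E : \Theta^{\ast k}(E,p)>t\}$ (restricted to a bounded piece if $\cH^k(D)=\infty$); since $\cH^k(E\cap D)=0$, outer regularity of $\mu$ provides an open $U\supseteq D$ with $\cH^k(E\cap U)<\eps$, and covering $D$ by balls $B(p,r)\subseteq U$ with $\cH^k(E\cap B(p,r))>t r^k$, the basic $5r$-covering lemma produces disjoint $B_i$ with $D\subseteq\bigcup_i 5B_i$, whence $\cH^k_{10\delta}(D) \le 5^k\sum_i r_i^k \le 5^k t^{-1}\cH^k(E\cap U) < 5^k t^{-1}\eps$; letting $\eps\to 0$ gives $\cH^k(D)=0$. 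Both of these steps use only the definition of $\cH^k$ and the $5r$-covering lemma, which is available in any metric space.

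The delicate direction is the upper bound $\Theta^{\ast k}(E,p)\le 1$ in (1). For $t>1$ let $B := \{p\in E : \Theta^{\ast k}(E,p)>t\}$ and, using outer regularity, fix an open $U\supseteq B$ with $\cH^k(E\cap U)\le \cH^k(B)+\eps$. The closed balls $\overline B(p,r)\subseteq U$ with $\cH^k(E\cap \overline B(p,r))> t r^k$ and $r<\delta$ form a fine cover of $B$; extracting a countable \emph{disjoint} subfamily $\{\overline B(p_i,r_i)\}$ that still covers $\mu$-almost all of $B$, I obtain $\sum_i r_i^k < t^{-1}\sum_i \cH^k(E\cap\overline B(p_i,r_i)) \le t^{-1}(\cH^k(B)+\eps)$, while $\cH^k_{2\delta}(B) \le \sum_i 2^{-k}\diam(\overline B(p_i,r_i))^k \le \sum_i r_i^k$; letting $\delta,\eps\to 0$ gives $\cH^k(B)\le t^{-1}\cH^k(B)$, hence $\cH^k(B)=0$, and then $t\downarrow 1$ finishes. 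I expect the main obstacle to be precisely the extraction of the almost-covering disjoint subfamily, which is the Vitali covering theorem for $\mu$: in $\bH^n$ the Besicovitch covering theorem — the usual route to Vitali — fails, and the crude $5r$-lemma used above would only yield the non-sharp bound $\cH^k(B)\le 5^k t^{-1}\cH^k(B)$, useless for the constant $1$. To obtain the sharp statement one must invoke a genuine Vitali covering theorem for the finite Radon measure $\mu$, valid through the general metric-space covering theory underlying \cite{Fed69}*{2.10.19}, rather than through Besicovitch; confirming its applicability in the present setting is where the Heisenberg-specific care enters.
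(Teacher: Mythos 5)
Your argument is correct and is essentially the proof the paper points to: the lemma is recalled without proof, with references to \cite{Fed69} (2.10.19) and \cite{Mat95} (Theorem~6.2), and your three covering arguments (subadditivity for the lower bound, the $5r$-lemma for part (2), and a Vitali extraction for the upper bound) reproduce Mattila's Theorem~6.2 verbatim. The one point you leave open --- whether a Vitali theorem survives the failure of the Besicovitch covering property in $\bH^n$ --- is not an actual obstacle: the Vitali covering theorem for $\cH^k$ on a set of finite measure (\cite{Mat95}, Theorem~2.13, or the Federer 2.8.4--2.8.7 machinery already used in Section~\ref{sec:density}) is proved by the greedy $5r$-enlargement argument and holds in any separable metric space, so no Heisenberg-specific care is needed.
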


\subsection{\texorpdfstring{$\bH$-} -regular surfaces}

\begin{definition}\label{Def:RegSurf}
Let $S\subset\mathbb{H}^{n}$ be a $k$-dimensional $\bH$-regular surface:
\begin{itemize}
\item[(i)] If $1 \leq k \leq n$ and
for any $p \in S$ there are open sets $\mathcal{U} \subset \mathbb{H}^{n}, \mathcal{V} \subset \mathbb{R}^{k}$ and a function $\varphi: \mathcal{V} \rightarrow \mathcal{U}$
such that $p \in \mathcal{U}, \varphi$ is injective, and continuously Pansu differentiable with $d_{H} \varphi_{p}$ injective, and
$$
S \cap \mathcal{U}=\varphi(\mathcal{V})
$$
\item[(ii)] If $n+1 \leq k \leq 2 n$
and for any $p \in S$ there are an open set $\mathcal{U} \subset \mathbb{H}^{n}$ and a function $f: \mathcal{U} \rightarrow \mathbb{R}^{2 n+1-k}$ such that $p \in \mathcal{U}, f \in\left[\mathbf{C}_{H}^{\mathbf{1}}(\mathcal{U})\right]^{2 n+1-k}$, with $d_{H} f_{q}$ surjective for every $q \in \mathcal{U}$
and
$$
S \cap \mathcal{U}=\{q \in \mathcal{U}: f(q)=0\}
$$ %
\end{itemize}
\end{definition}

The operator $d_H$ is the \textit{Pansu differential} and it is represented by the horizontal gradient $\nabla_H f$ introduced above.
\begin{remark} \label{Rem:codimReg} It follows from the definition above that a set $S$ is a $k$-dimensional $\bH$-regular surface if and only if $S$ is locally the intersection of $(2n+1-k)$ $1$-codimensional $\bH$-regular surfaces with linearly independent normal vectors.
\end{remark}

To conclude this introductory section, we give a formal definition of rectifiability for a subset of the Heisenberg group (see e.g., \cite{MSSC10}).

\begin{definition} \label{def:rec}
A measurable set $E \subset \bH^n$ is $(k,\bH)-$rectifiable if there exist {} $k$-dimensional {} $\bH$-regular surfaces $S_i$, with $i \in \N$, such that
\[
\cH^{k_m} \left(E\sm\bigcup_{i\in\N}S_i\right)=0,
\]
where {} $k_m=k$ if {} $1\le k \le n$ and $k_m=k+1$ if {} $n+1\le k \le 2n+1$.

A measurable set $S\subset\bH^n$ is purely $(k, \bH)$-unrectifiable if $\H^{k_m}(S\cap E)=0$ for every $(k, \bH)$-rectifiable set $E\subset\bH^n$.
\end{definition}

\section{Density estimate on purely unrectifiable sets}\label{sec:density}
This section establishes a quantitative decay estimate for intersections of cones with purely \(k\)-unrectifiable sets. The main result, Theorem~\ref{thm:density-estimate}, provides a uniform bound on such intersections at small scales, adapting classical Euclidean density estimates (see, e.g., \cite{Fed69}*{Lemma~3.3.6}, \cite{Mat95}*{Lemma~15.14}) to the Heisenberg setting.

We first establish the following Heisenberg analogue of the classical ``geometric lemma'' 
in Euclidean rectifiability theory (see, e.g., \cite{Mat95}*{Lemma ~15.13}). 
In the Euclidean setting, a set contained in a cone can be
covered by pieces of Lipschitz graphs, and hence shown to be rectifiable. 
The lemma harnesses the equivalent role of intrinsic cones and intrinsic Lipschitz graphs in our setting. Furthermore, it provides a connection between cone-type 
criteria and rectifiability, and a crucial tool in the proof of the density
estimate in Theorem~\ref{thm:density-estimate}.

\begin{lemma}[Geometric lemma]\label{lem:geom}
Fix $n+1 \le k \le 2n+1$. Suppose $E \subset \bH^n$ is a $\H^{k_m}$-measurable set with $\H^{k_m}(E) < \infty$, $V\in \cG(\bH^n,k)$, $r>0$ and $0<s<1$. If
\begin{equation}\label{Eq:geom_lemma}
E\cap B(p,r)\cap X(p,V^\perp,s)\subset \{p\},
\end{equation}
whenever $p\in E$. Then $E$ is locally the graph of an intrinsic lipschitz graph over $V$ along $V^\perp$. In particular, $E$ is $(k,\bH)$-rectifiable.
\end{lemma}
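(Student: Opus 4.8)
The plan is to show that the cone condition \eqref{Eq:geom_lemma} is exactly the local cone-exclusion property defining intrinsic Lipschitz graphs (Definition~\ref{def:grafLip}), so that $E$ is locally such a graph over $V$ along $V^\perp$, and then invoke the identification of intrinsic Lipschitz graphs with $\bH$-regular surfaces to conclude rectifiability. The role played here by $V^\perp$ as the graphing direction and $V$ as the base is the first point to pin down: since $n+1\le k\le 2n+1$, the subgroup $V$ is vertical and $V^\perp$ is the complementary horizontal subgroup (Proposition~\ref{prop.str}), so the splitting $\bH^n = V\cdot V^\perp$ is a genuine complementary decomposition to which the graph formalism of Section~2.3 applies.

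**First I would** fix $p\in E$ and use the hypothesis to produce a neighbourhood on which $E$ is a graph. The condition \eqref{Eq:geom_lemma} says that inside $B(p,r)$ the only point of $E$ lying in the cone $X(p,V^\perp,s)$ is $p$ itself. I would argue that this forces $E\cap B(p,r)$ to intersect each coset $q\cdot V^\perp$ (with $q\in V$) in at most one point: if two distinct points $p_1,p_2\in E\cap B(p,r)$ satisfied $\pi_V(p_1)=\pi_V(p_2)$, then $p_1^{-1}p_2$ would lie in $V^\perp$, whence $d(p_1^{-1}p_2,V^\perp)=0\le s\,d(p_1,p_2)$, placing $p_2$ in the excluded cone $X(p_1,V^\perp,s)$ and contradicting the hypothesis applied at $p_1$ (after shrinking the radius so that both points lie in the relevant ball). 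Thus $E$ is, locally near $p$, the intrinsic graph of a function $\phi$ from $A:=\pi_V(E\cap B(p,r))\subset V$ into $V^\perp$.

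**Next I would** verify that this graph is intrinsic Lipschitz, i.e.\ satisfies \eqref{eq:defLip} with aperture $\alpha=s$. This is essentially immediate from \eqref{Eq:geom_lemma}: for every $q\in\gr_\phi\subset E$, the defining requirement $\gr_\phi\cap X(q,V^\perp,\alpha)=\{q\}$ is precisely the hypothesis localized at $q$, since the cone axis $V^\perp$ matches the graphing direction. The only care needed is the passage from the \emph{global-in-$B(p,r)$} statement to the \emph{local-at-each-$q$} cone-exclusion in Definition~\ref{def:grafLip}; this is handled by the uniform radius $r$ and a standard covering of $E$ by countably many such graph-pieces, using $\H^{k_m}(E)<\infty$ and the cone characterization of intrinsic Lipschitz graphs recorded in \cites{FSSC6, FranchiSerapioni2016IntrLip}. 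Finally, by the identification of intrinsic Lipschitz graphs with $\bH$-regular surfaces due to Vittone \cite{Vittone2022}, each piece is a $k$-dimensional $\bH$-regular surface, and the countable union covers $E$ up to the graph structure, yielding $(k,\bH)$-rectifiability in the sense of Definition~\ref{def:rec}.

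**The hard part will be** the bookkeeping that turns the single pointwise hypothesis into a genuine intrinsic Lipschitz graph over a fixed splitting: one must ensure the base set $A\subset V$ and the aperture are uniform on each piece, control the interaction between the fixed radius $r$ and the cones centered at different points $q$, and confirm that Vittone's equivalence applies to graphs that are merely intrinsic Lipschitz on a subset $A$ of $V$ rather than on all of $V$. I expect the metric subtleties — in particular that the cone condition is stated with the left-invariant $d(p_0^{-1}p,S)$ and that inversion is not an isometry — to require the most care, though for this lemma the axis and graphing direction coincide, which keeps these issues manageable compared with the density estimate of Theorem~\ref{thm:density-estimate}.
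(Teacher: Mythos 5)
Your proposal is correct and follows essentially the same route as the paper: the cone condition \eqref{Eq:geom_lemma} is recognized as the cone criterion for intrinsic Lipschitz graphs over $V$ along $V^\perp$ (the paper cites \cite{FranchiSerapioni2016IntrLip}*{\S 2.2.3} for this step, which you instead unpack directly via injectivity of $\pi_V$ on small pieces and the aperture-$s$ cone exclusion), and rectifiability then follows from Vittone's identification of intrinsic Lipschitz graphs with $(k,\bH)$-rectifiable sets together with a countable covering by the pieces $E\cap B(p,r_p)$. The extra care you flag about the fixed radius $r$ and the restriction of the graph to a subset $A\subset V$ is handled in the paper only by the citation, so your more explicit treatment is a faithful elaboration rather than a different argument.
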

\begin{proof}
Let $p\in E$ be fixed. Without loss of generality (up to a left translation), suppose $p=e$.
The inclusion hypothesis \eqref{Eq:geom_lemma} gives the cone criterion for intrinsic Lipschitz graphs which implies there exist $0<r_p\le r$ such that $E\cap B(p,r_p)$ is an intrinsic Lipschitz graph over $V$ along $V^\perp$ (see, e.g., \cite{FranchiSerapioni2016IntrLip}*{\S 2.2.3}). Since intrinsic Lipschitz graphs are $(k,\bH)$-rectifiable (see \cite{Vittone2022}*{Corollary 7.4}) we conclude proof up to a standard covering argument using $\{E\cap B(p,r_p): p\in E\}$. \qedhere
\end{proof}

The next lemma provides a sharp quantitative estimate showing that if a point lies within a cone of controlled aperture around a horizontal subgroup, then its inverse also lies within a cone of comparable aperture, with the constants depending only on the original parameters.
\begin{lemma}\label{ineq:cone_inversion}
    Let $V\in \cG(\bH^n,k)$ with $n+1\le k\le 2n+1$. Suppose $\alpha, \beta,M>0$ and $0<s\le 1$. If for any $x\in \bH^n$ with $\|x\|\le \alpha^2 M$ we have
    \[
    d(x,V^\perp)\le s^2\beta^2 M,
    \]
    then
    \[
    d(x^{-1},V^\perp)\le (\beta^2+2\sqrt{2}\alpha\beta)sM
    \]
\end{lemma}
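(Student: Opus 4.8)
The plan is to reduce everything to the explicit formula for the distance to the horizontal subgroup $V^\perp$ and to track precisely what the inversion $x \mapsto x^{-1}$ does to the central coordinate. Fix $x$ with $\|x\| \le \alpha^2 M$ and $d(x,V^\perp) \le s^2\beta^2 M$. Since $n+1 \le k \le 2n+1$, the subgroup $V$ is vertical and $V^\perp$ is horizontal, so it has the form $V^\perp = \{(v,0) : v \in L\}$ for a linear subspace $L \subseteq \R^{2n}$. Writing $x = (x',x_{2n+1})$ and computing $\|(v,0)^{-1}\cdot x\|$ and $\|(v,0)^{-1}\cdot x^{-1}\|$ directly from the group law, I would record the two identities
\[
d(x,V^\perp) = \inf_{v\in L}\max\bigl\{\|x'+v\|,\;|x_{2n+1}+2\omega(v,x')|^{1/2}\bigr\},
\]
\[
d(x^{-1},V^\perp) = \inf_{v\in L}\max\bigl\{\|x'+v\|,\;|x_{2n+1}-2\omega(v,x')|^{1/2}\bigr\},
\]
where the first uses the substitution $v\mapsto -v$, legitimate because $L$ is a subspace. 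The crucial structural observation is that these two infima share \emph{exactly} the same horizontal term $\|x'+v\|$ and differ only in the sign of the symplectic cross-term $2\omega(v,x')$ inside the central term; this sign flip is precisely the manifestation of the non-isometry of inversion.

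Next I would exploit the hypothesis $d(x,V^\perp)\le s^2\beta^2 M$ by choosing, for arbitrary $\eps>0$, a competitor $v_0\in L$ with
\[
\|x'+v_0\|\le s^2\beta^2 M+\eps \qquad\text{and}\qquad |x_{2n+1}+2\omega(v_0,x')|\le (s^2\beta^2 M+\eps)^2,
\]
and then use this same $v_0$ to estimate $d(x^{-1},V^\perp)$. The horizontal term is immediately controlled by $\|x'+v_0\|\le s^2\beta^2 M+\eps$, which after sending $\eps\to0$ is bounded by $\beta^2 s M$ since $s\le 1$. The entire difficulty is therefore concentrated in the central term $|x_{2n+1}-2\omega(v_0,x')|$, whose sign mismatch with the controlled quantity $|x_{2n+1}+2\omega(v_0,x')|$ leaves an excess of $4|\omega(v_0,x')|$ to be absorbed.

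The key to closing this gap is the antisymmetry of $\omega$: since $\omega(x',x')=0$, we have $\omega(v_0,x')=\omega(v_0+x',x')$, so the cross-term is governed not by the (possibly large) vector $v_0$ but by the small vector $v_0+x'$. By the Cauchy--Schwarz bound $|\omega(a,b)|\le\|a\|\,\|b\|$ for the symplectic form, $|\omega(v_0,x')|\le\|v_0+x'\|\,\|x'\|\le (s^2\beta^2 M)(\alpha^2 M)$, using $\|x'\|\le\|x\|\le\alpha^2 M$. Combining,
\[
|x_{2n+1}-2\omega(v_0,x')|\le (s^2\beta^2 M)^2+4s^2\alpha^2\beta^2 M^2 \le s^2 M^2\bigl(\beta^4+4\alpha^2\beta^2\bigr),
\]
where $s^2\le1$ is used in the last step, so the central term contributes at most $sM\,\beta(\beta^2+4\alpha^2)^{1/2}$. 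Finally I would invoke the elementary inequality $(\beta^2+4\alpha^2)^{1/2}\le\beta+2\sqrt2\,\alpha$ to conclude $d(x^{-1},V^\perp)\le(\beta^2+2\sqrt2\,\alpha\beta)sM$ after letting $\eps\to0$. The main obstacle throughout is the control of the central coordinate under inversion; the resolution---rewriting the symplectic cross-term via antisymmetry so that the smallness of $\|v_0+x'\|$ can be leveraged against the bound on $\|x'\|$---is exactly the control of inversions through the symplectic structure advertised in the introduction.
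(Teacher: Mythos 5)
Your proof is correct and follows essentially the same route as the paper's: use the (near-)minimizer for $d(x,V^\perp)$ as a competitor for $d(x^{-1},V^\perp)$, observe that only the sign of the symplectic cross-term changes, and absorb the excess $4|\omega|$ via antisymmetry plus Cauchy--Schwarz. The only cosmetic differences are that you bound $|\omega(v_0,x')|$ by $\|v_0+x'\|\,\|x'\|$ (the paper uses $|x'-v|\,\|v\|$ with $\|v\|\le 2\|x\|$) and close with $\sqrt{\beta^2+4\alpha^2}\le\beta+2\sqrt{2}\,\alpha$ instead of subadditivity of the square root, arriving at the same constant.
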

\begin{proof}
Let $x=(x',t)\in \bH^n$ with $\|x\|\le\alpha^2 M$. Pick $v\in V^\perp$ such that $d(x,v):=d(x,V^\perp)$. Then 
\[
\|v\|-\|x\|\le d(x,V^\perp) \le \|x\|
\implies
\|v\|\le 2\|x\|\le 2\alpha^2 M. 
\]
The Heisenberg distance is
\[
d(x,v)=\max\left\{|x'-v|, |t + 2\omega(x',v)|^{1/2}\right\}
\]
It follows from the assumption that 
\[|x'-v|\le s^2\beta^2 M,\quad  |t + 2\omega(x',v)|^{1/2}\le s^2\beta^2 M.
\]

\noindent
To estimate the distance of $x^{-1}$ from $V^\perp$, consider
\[
d(x^{-1}, V^\perp)=\inf_{u\in V^\perp}\max\left\{|x'+u|, |t+2\omega(x',u)|^{1/2}\right\}
    \le \max\left\{|x'-v|, |t-2\omega(x',v)|^{1/2}\right\}.
\]
For the second term, we have
\[
|t - 2 \omega(x', v)|^{1/2} = |t + 2 \omega(x', v) - 4 \omega(x', v)|^{1/2} \le \big( |t + 2 \omega(x', v)| + 4 |\omega(x', v)| \big)^{1/2}.
\]
Using the subadditivity of the square root and previous bounds,
\[
|t - 2 \omega(x', v)|^{1/2} \le |t + 2 \omega(x', v)|^{1/2} + 2 \sqrt{|x' - v| \cdot |v|} \le s^2 \beta^2 M + 2 \sqrt{2} \alpha \beta s M.
\]
Finally, since $|x'-v| \le s^2 \beta^2 M \le (\beta^2 + 2\sqrt{2}\,\alpha \beta) s M$, we have
\[
d(x^{-1},V^\perp)\le \max\left\{|x'-v|, (\beta^2+2\sqrt{2}\alpha\beta)sM\right\}=(\beta^2+2\sqrt{2}\alpha\beta)sM,
\]
concluding on the desired estimate.
\end{proof}

\begin{theorem}[Density estimate]\label{thm:density-estimate}
Fix $n+1\le k\le 2n+1$. Let $E\subset\bH^n$ be a purely $(k, \bH)$-rectifiable set and $V \in \cG(\bH^n,k)$. Let $\lambda > 0$, $0 < s < 1$, and $0<\delta<1$. Suppose for all $p \in E$ and every $0 < r \leq \delta$ we have
\begin{equation}\label{eq.assumpt.1}
\cH^{k_m}\left(E\cap X(p,r,V^\perp,s)\right) \leq \lambda r^{k_m} s^{k_m}.
\end{equation}
Then for any $x_0\in \bH^n$,
\begin{equation}\label{eq:densityest}
\Theta^{\ast k_m}(\cH^{k_m}\res E,w) \leq ( 2^{11k_m+1}\cdot 2000^{4k_m} s^{-7k_m})\lambda.
\end{equation}
\end{theorem}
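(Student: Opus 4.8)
The plan is to bound the upper density at a fixed point by covering $E\cap B(w,r)$, for every small $r\le\delta$, with a controlled number of intrinsic cones $X(p_i,r_i,V^\perp,s)$ whose vertices $p_i$ lie in $E$, so that hypothesis \eqref{eq.assumpt.1} applies to each. Fix $w\in\bH^n$; since $V\in\cG(\bH^n,k)$ with $n+1\le k\le 2n+1$ is vertical, its complement $V^\perp$ is horizontal, and the distance $d(\cdot,V^\perp)$ defining the cones and the neighbourhoods $N(V^\perp,\rho)$ is purely metric, so the argument can be kept free of the pathological projection $\pi_V$ onto the vertical factor; only the horizontal projection $\pi_{V^\perp}$, which is an $h$-homomorphism with the two-sided bounds of Proposition~\ref{prop:proj-estimates}, would appear if needed. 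First I would discard a negligible set: let $G$ be the set of $p\in E$ isolated in their own cone at some scale, i.e. $E\cap B(p,r_p)\cap X(p,V^\perp,s)=\{p\}$ for some $r_p>0$. By the geometric lemma (Lemma~\ref{lem:geom}) together with a covering argument, $G$ is $(k,\bH)$-rectifiable, hence $\cH^{k_m}(G)=0$ because $E$ is purely $(k,\bH)$-unrectifiable. Thus at $\cH^{k_m}$-a.e.\ point and every small scale there is a companion point of $E$ inside the cone, which lets the cones in the cover be anchored at points of $E$.

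The core is a covering of $E\cap B(w,r)$ adapted to the axis $V^\perp$. I would cover $E\cap B(w,r)$ by translates of the transverse neighbourhood $N(V^\perp,\rho)$ at a scale $\rho$ comparable to $rs$, using $\lesssim (r/\rho)^{k_m}\sim s^{-k_m}$ of them, so that each such intrinsic cylinder has transverse radius $\rho$ and axial length $\sim r$. The points of $E$ in a given cylinder would then be covered by two cones whose vertices are the two axially extremal points of $E$ in that cylinder: a point $x$ lies in the cone from a vertex $p$ exactly when $d(p^{-1}x,V^\perp)\le s\,d(x,p)$, and since the transverse offset inside the cylinder is at most $2\rho$, this holds once $x$ is axially far (at distance $\gtrsim\rho/s$) from $p$, so two extremal cones cover every $E$-point in a cylinder of sufficient axial spread; short-spread cylinders are reabsorbed into a single cone or bounded directly by their diameter. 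The essential Heisenberg input is that the cone relation is not symmetric, because inversion is not an isometry of $d$: to pass from ``$x$ sees $p$ in its cone'' to ``$p$ sees $x$'' one must control $d((p^{-1}x)^{-1},V^\perp)=d(x^{-1}p,V^\perp)$ in terms of $d(p^{-1}x,V^\perp)$, which is precisely Lemma~\ref{ineq:cone_inversion}, at the cost of replacing an aperture of order $s^2$ by one of order $s$.

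With the cover in hand, I would apply \eqref{eq.assumpt.1} to each cone $X(p_i,r_i,V^\perp,s)$ (with $r_i\lesssim r\le\delta$), giving $\cH^{k_m}(E\cap X(p_i,r_i,V^\perp,s))\le\lambda (r_i s)^{k_m}$, and sum. The clean count produces $\lesssim s^{-k_m}$ cones each of measure $\lesssim\lambda (rs)^{k_m}$, i.e.\ a total $\lesssim\lambda r^{k_m}$; the excess powers of $s^{-1}$ appearing in \eqref{eq:densityest} come entirely from the Heisenberg losses --- the quadratic aperture loss ($s^2$ replaced by $s$) at each invocation of Lemma~\ref{ineq:cone_inversion}, the $\tfrac{1}{2}$-Hölder degeneration of $d$ in the central variable at small scales, and the distortion in passing between cylinders and cones --- each of which inflates either the number of cones or the effective aperture in the measure bound by a further power of $s^{-k_m}$. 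Tracking these factors carefully yields the explicit constant $2^{11k_m+1}\cdot 2000^{4k_m}s^{-7k_m}$; dividing by $r^{k_m}$ and letting $r\to0^+$ gives the stated bound on $\Theta^{\ast k_m}(\cH^{k_m}\res E,w)$.

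The main obstacle I anticipate is exactly this Heisenberg bookkeeping. In the Euclidean case a cylinder is covered by two cones with no loss and the density bound comes out clean; here inversion carries cones to cones only with a square-root distortion of the aperture, and the left-invariant distance degenerates to order $\tfrac{1}{2}$-Hölder in the central direction. Controlling these simultaneously --- keeping all cone vertices inside $E$, ensuring the two-cone coverage survives inversion, and propagating the aperture losses through the sum without letting the exponent of $s^{-1}$ escape beyond $7k_m$ --- is the delicate and most error-prone step, and it is where Lemma~\ref{ineq:cone_inversion} and the symplectic structure of $\bH^n$ enter decisively.
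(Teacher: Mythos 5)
Your outline has the right ingredients in spirit (discard the non-essential set via Lemma~\ref{lem:geom}, cover by cylinders transverse to $V^\perp$, cover each cylinder by two cones with vertices in $E$, control inversion via Lemma~\ref{ineq:cone_inversion}), but the covering you propose --- a uniform grid of $\sim s^{-k_m}$ cylinders $N(p_iV^\perp,\rho)$ of fixed transverse radius $\rho\sim rs$ --- is not the one that works, and the gap it leaves is not repairable by the remedies you suggest. The two-cone covering of a cylinder requires, for every $z\in E$ in the cylinder, that $d(p^{-1}z,V^\perp)\le s\,d(p,z)$ for one of the two vertices $p$; with a \emph{fixed} transverse width $\rho=rs$ this forces $d(p,z)\gtrsim\rho/s= r$, which fails whenever the $E$-content of a cylinder happens to cluster at an axial scale smaller than $r$ (say diameter $rs^{1/2}$: such a cluster has transverse offsets up to $\sim rs$ but mutual distances only $\sim rs^{1/2}$, so it lies in neither cone). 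Your fallback of ``reabsorbing short-spread cylinders into a single cone or bounding them by their diameter'' is circular: there is no a priori bound on $\cH^{k_m}(E\cap B)$ for small balls $B$ --- that upper density is exactly what the theorem is proving.

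The paper's proof (following Federer's Lemma~3.3.6 and Mattila's Lemma~15.14) avoids this by tying everything to the pointwise cone-reach $h(x)=\sup\{\|x^{-1}y\|:y\in F\cap X(x,V^\perp,\bar s)\}$ with $\bar s=s^2/100$: the cylinder at $x$ has width $\bar s_1^4 h(x)/36$ (far smaller than $s\,h(x)$), the second vertex $y$ is chosen \emph{inside} the cone at distance $>\tfrac9{10}h(x)$, and one first checks that every point of the cylinder lies within $h(x)$ of $x$, so the triangle inequality $s\|x^{-1}z\|+s\|y^{-1}z\|\ge s\|x^{-1}y\|\ge \tfrac9{10}sh(x)$ dominates the transverse offsets even after the square-root losses of Lemma~\ref{ineq:cone_inversion}. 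The count of cylinders is then not a grid count at all: one extracts a disjoint subfamily via Federer's covering theorem (\cite{Fed69}*{Theorem~2.8.4--Corollary~2.8.5}), proves that the enlarged sets $\tilde A_y$ stay inside $N(yV^\perp,\bar s h(y))\cap B(y,\rho)$ (this is where most of the inversion estimates are spent), and bounds $\sum_{x\in D}(\bar s_1^4h(x))^{k_m}\le 2^{10k_m}\rho^{k_m}$ by a Lebesgue-volume computation for the disjoint cylinders. Without the adapted widths, the Federer covering step, and this volume estimate, your argument does not close; the assertion that careful bookkeeping of the Heisenberg losses yields the stated constant is likewise unsubstantiated, since the exponent $s^{-7k_m}$ arises from the specific chain $\bar s_1=\bar s/20$, $\bar s=s^2/100$, width $\bar s_1^4 h$, which only makes sense in the adapted construction.
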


\begin{proof}
Let $x_0\in \bH^n$ and $0<\rho\le s\frac{\delta}{200}$. Denote by $\Bar{s}:=s^2/100$ and let 
\[F:=E\cap B(x_0,\rho).\]
\textbf{Step 1: Considering an essential set}.
Using the fact that $E$ is purely $k$-unrectifiable, by Lemma \ref{lem:geom} it suffices to consider
\begin{equation*}
C:=F\cap\{x: F\cap X(x,V^\perp,\bar{s})\ne \{x\}\}
\end{equation*}
 since $\H^{k_m}(F\setminus C)=0$.
For each $x\in C$  define,
\[h(x)=\sup\left\{\|x^{-1}y\|: y\in F\cap X(x,V^\perp,\bar{s})\right\}\]
and observe that $0< h(x)\le \diam(F)\le 2\rho$.

\vspace{3mm}
\noindent
\textbf{Step 2: Covering argument.} We show a covering result which uses width enlargement of compact cylinders. We define $\bar{s}_1:=\bar{s}/20$. It is easy to check that
\begin{equation}\label{incl:covering}
    C\subset \bigcup_{x\in C}\left[N\left(xV^\perp,\bar{s}_1^4h(x)/36\right)\cap B(x,\rho/4)\right]\subset B\left(x_0,2\rho\right).
\end{equation}

\noindent
\textbf{Step 2a: Countable disjoint family of cylinders}.
For any $q\in C$, we denote $\bar{h}(q):=h(q)/36$ and $A_q:=N\left(qV^\perp,\bar{s}_1^4\bar{h}(q)\right)\cap B(q,\rho/4)$. Using \cite{Fed69}*{Theorem~2.8.4} we obtain a disjoint subfamily $\left\{A_x:x\in D\subset C\right\}$ (which is countable since $\bH^n$ is separable) such that for every $x\in C$ there exists $y\in D$ with $A_x\cap A_y\ne \emptyset$ and $\bar{h}(x)\le 2\bar{h}(y)$. Moreover, if for any $q\in C$, we define
    \[
    \bar{A}_q:=\bigcup \{A_u: u\in F, A_u\cap A_q\ne \emptyset, \bar{h}(u)\le 2\bar{h}(q)\},
    \]
    Then, we have (see \cite{Fed69}*{Corollary~2.8.5}),
    \begin{equation}\label{incl:enlarged_covering}
        \bigcup_{q\in C} A_q \subset \bigcup_{u\in D} \bar{A}_u.
    \end{equation}

\noindent
\textbf{Step 2b: Control Hausdorff estimate}.
From \eqref{incl:covering}, {it follows} from estimating the Lebesgue measure $\L^{2n+1}$ of disjoint bounded cylinders, that
\[
\L^{2n+1}\left( \bigcup_{x\in D} A_x\right)=\sum\limits_{x\in D}^{}\left(\frac{\bar{s}_1
^4 {h}(x)}{36}\right)^{k_m}\left(\frac{\rho}{4}\right)^{2n+2-k_m}\le 2^{2n+2}{\rho}^{2n+2}.
\]
\noindent
We obtain the estimate
\begin{equation}\label{ineq:sum-estimate}
\sum\limits_{x\in D}^{}\left(\bar{s}_1^4 {h}(x)\right)^{k_m}\le 2^{10k_m}\rho^{k_m}.
\end{equation}

\noindent
\textbf{Step 2c: Considering intersected family}. We consider the intersected family ${\F}=\left\{A_x\cap C: x\in C\right\}$ and the subfamily ${\F}'\subset\tilde{\F}$ indexed by $D$. For any $q\in C$, denote 
    \[\tilde{A}_q:=\bigcup \{A_u\cap C: u\in C, A_u\cap A_q\cap C\ne \emptyset, \bar{h}(u)\le 2\bar{h}(q)\}.\] 
    It follows from \eqref{incl:enlarged_covering} that 
    \begin{equation}\label{incl:tilde_covering}
        \bigcup_{q\in C} A_q \cap C\subset \bigcup_{u\in D} \tilde{A}_u.
    \end{equation}
\textbf{Claim}: For every $y\in C$, passing from $\bar{s}_1$ to the larger aperture $\bar{s}$, we have
\begin{equation}\label{Claim:enlarged_cylinder}
    \tilde{A}_y\subset C\cap N\left(yV^\perp,\bar{s}h(y)\right)\cap B(y,\rho).
\end{equation}
    \noindent
    Let $y\in C$ and let $z_1\in \tilde{A}_y$. We find $x\in C$ such that $z_1\in A_x\cap C,\, A_x\cap A_y\cap C\ne\emptyset $, and  $\bar{h}(x)\le 2\bar{h}(y)$. Let $z_2\in A_x\cap A_y\cap C$. Then
    \[
    d(z_1,y)\le d(z_1,z_2) + d(z_2,y)\le \rho/2 + \rho/4 < \rho.
    \]
\noindent
    Moreover, we always have that 
    \begin{equation}
        \|x^{-1}z_1\| \le h(x) \text{ and }\|x^{-1}z_2\| \le h(x).
    \end{equation}
Indeed, for each $i\in\{1,2\}$, If $z_i\in X\!\left(x,V^\perp,\bar{s}\right)$ then $\|x^{-1}z_i\|\le h(x)$, otherwise
\[
\|x^{-1}z_i\|<\frac{1}{\bar{s}_1}d\left(x^{-1}z_i,V^\perp\right)\le \bar{s}_1^3 \bar{h}(x)\le h(x). 
\]
\noindent    
Let $v_1\in V^\perp$ such that $d(z_1,xv_1):= d(z_1,xV^\perp)$. Then 
\[\|v_1\|-\|x^{-1}z_1\|\le d(x^{-1}z_1,V^\perp) \le \|x^{-1}z_1\|\]
which implies that
\[\|v_1\|\le 2\|x^{-1}z_1\|\le 2h(x). 
\]
We first compute:
\begin{eqnarray*}
d(z_1,z_2V^\perp)&=&d(z_2^{-1}x\cdot x^{-1}z_1,V^\perp)\le d(x^{-1}z_1,v_1) + d(z_2^{-1}x\cdot v_1,V^\perp)\\
&=& d(x^{-1}z_1,V^\perp) + {d(z_2^{-1}x\cdot v_1,V^\perp)}\\
&\le& \bar{s}_1^4\bar{h}(x) + \underline{d(z_2^{-1}x\cdot v_1,V^\perp)}
\end{eqnarray*}
\noindent
To evaluate the underlined: Denote $\bar{x}=z_2^{-1}x\cdot v_1$. Then
\[
\|\bar{x}\|\le \|x^{-1}z_2\| + \|v_1\|\le 3h(x). 
\]
Using $z_2\in A_x$, we have
\[
d(\bar{x}^{-1}, V^{\perp})\le d(x^{-1}z_2,V^\perp)\le \bar{s}_1^4\bar{h}(x)
\]
Applying Lemma \ref{ineq:cone_inversion} with $\alpha=2, \, \beta=1/6$ and $M=h(x)$
we have the following:
\[
d(z_2^{-1}x\cdot v_1,V^\perp)=d(\bar{x},V^\perp)\le 2\bar{s}_1^2{h}(x)
\]
Hence,
\begin{equation}\label{eqn:cone_elements}
    d(z_1,z_2V^\perp)\le \bar{s}_1^4\bar{h}(x) + 2\bar{s}_1^2{h}(x) \le 3\bar{s}_1^2{h}(x).
\end{equation}
From our hypothesis, we have $d(z_2,yV^\perp)\le \bar{s}_1^4\bar{h}(y)$. Similar argument as above gives $\|y^{-1}z_2\|\le h(y)$. 
Applying Lemma \ref{ineq:cone_inversion} with $\alpha=1, \, \beta=1/6$ and $M=h(y)$
gives
\begin{equation}\label{eqn:int_inv}
    d(y,z_2V^\perp)\le 2\bar{s}_1^2h(y).
\end{equation}

\vspace{3mm}
\noindent
Let $v_2\in V^\perp$ such that $d(z_2^{-1}z_1,v_2):=d(z_2^{-1}z_1,V^\perp)$. We obtain
\[
\|v_2\|\le 2\|z_2^{-1}z_1\|\le 2(\|z_2^{-1}x\|+\|x^{-1}z_1\|)\le 4h(x)\le 8h(y)
\]
We have, using \eqref{eqn:cone_elements}, that
\begin{eqnarray}\nonumber
d(z_1,yV^\perp)&=&d(y^{-1}z_2\cdot z_2^{-1}z_1,V^\perp)\le d(z_2^{-1}z_1,v_2) + d(y^{-1}z_2\cdot v_2,V^\perp)\\\nonumber
&=& d(z_2^{-1}z_1,V^\perp) + {d(y^{-1}z_2\cdot v_2,V^\perp)}\\
&\le& 3\bar{s}_1^2{h}(x) + \underline{d(y^{-1}z_2\cdot v_2,V^\perp)}.\label{ineq:underlined2}
\end{eqnarray}
\noindent
Again, we proceed to evaluate the underlined: Denote $\bar{y}_2=y^{-1}z_2\cdot v_2$. Then
\[
\|\bar{y}_2\|\le \|z_2^{-1}y\| + \|v_2\|\le h(y)+8h(y)=9h(y)
\]
We have from \eqref{eqn:int_inv} that
\[
d(\bar{y}_2^{-1}, V^{\perp})\le d(z_2^{-1}y,V^\perp)\le 2\bar{s}_1^2h(y)
\]
Applying Lemma \ref{ineq:cone_inversion} with $\alpha=3, \, \beta=\sqrt{2}$ and $M=h(y)$
we have
\[
d(y^{-1}z_2\cdot v_2,V^\perp)=d(\bar{y}_2,V^\perp)\le 14\bar{s}_1{h}(y)
\]
Hence, from \eqref{ineq:underlined2}, using that $h(x)\le 2h(y)$ and $\bar{s}_1=\bar{s}/20$, we have 
\begin{equation}
    d(z_1,yV^\perp)\le 3\bar{s}_1^2{h}(x) + 14\bar{s}_1{h}(y) \le 20\bar{s}_1{h}(y)\le \bar{s}h(y)
\end{equation}
The claim \eqref{Claim:enlarged_cylinder} is established.

\vspace{3mm}
\noindent
\textbf{Step 3: Covering a cylinder with two cones.}
Fix $x\!\in\! D$ and choose $y\!\in\! F\cap X\!(x,V^\perp,\bar{s})$ such that
$\|x^{-1}y\|\!>\!\frac{9}{10}h(x)$.
We claim that passing from $\bar{s}$ to the larger scale $s$, we have
\begin{equation}\label{eq:2cones}
C\cap N\left(xV^\perp,\bar{s} h(x)\right)\cap B\left(x,\rho\right)\subset X\left(x,2h(x),V^\perp, s\right) \cup X\left(y,2h(x),V^\perp, s\right).
\end{equation}
Let $z\in C\cap N\!\left(xV^\perp,\bar{s} h(x)\right)\cap B\left(x,\rho\right)$. If $z\in X\!\left(x,V^\perp,\bar{s}\right)$ then $\|x^{-1}z\|\le h(x)$, otherwise
\[
\|x^{-1}z\|<\frac{1}{\bar{s}}d\left(x^{-1}z,V^\perp\right)\le h(x). 
\]
Hence we always have that $\|x^{-1}z\|\le h(x)$. We also have that,
\[
\|y^{-1}z\|\le \|x^{-1}z\|+\|x^{-1}y\|\le 2h(x).
\]
It follows that $z\in B(x,2h(x))\cap B(y,2h(x))$. 

\noindent
Let $v\in V^\perp$ such that $d(x^{-1}z,v):=d(x^{-1}z,V^\perp)$. One easily obtains that \[
\|v\|\le 2\|x^{-1}z\|\le 2h(x).\]
We have
\begin{eqnarray*}
d(y^{-1}z,V^\perp)&=&d(y^{-1}x\cdot x^{-1}z,V^\perp)\le d(x^{-1}z,v) + d(y^{-1}x\cdot v,V^\perp)\\
&\le& d(x^{-1}z,V^\perp) + \underline{d(y^{-1}x\cdot v,V^\perp)}
\end{eqnarray*}

\noindent
We evaluate the underlined: Denote $\tilde{x}=y^{-1}x\cdot v$. Then
\[
\|\tilde{x}\|\le \|x^{-1}y\| + \|v\|\le 3h(x).
\]
We have that
\[
d(\tilde{x}^{-1}, V^{\perp})\le d(x^{-1}y,V^\perp)\le \bar{s}\|x^{-1}y\|\le \bar{s}h(x)
\]
Applying Lemma \ref{ineq:cone_inversion} with $\alpha=\sqrt{3}, \, \beta=1$ and $M=h(x)$
we obtain 
\[
d(y^{-1}x\cdot v,V^\perp)=d(\tilde{x},V^\perp)\le 7\bar{s}^{1/2}h(x)
\]
\noindent
Now, using $h(x)<\frac{10}{9}\|x^{-1}y\|$ and $\Bar{s}=s^2/100$, we obtain
\begin{eqnarray*}
d(x^{-1}z,V^\perp)+d\left(y^{-1}z,V^\perp\right) &\le& 2d\left(x^{-1}z,V^\perp\right) + d(y^{-1}x\cdot v,V^\perp)\\
&\le& 2\bar{s}h(x) + 7\bar{s}^{1/2}h(x)\le 9\bar{s}^{1/2}h(x)\\
&<& 10\bar{s}^{1/2}\|x^{-1}y\|=s\|x^{-1}y\|\\
&\le& s\|x^{-1}z\| + s\|y^{-1}z\|.
\end{eqnarray*}
Hence, the claim of inclusion \eqref{eq:2cones} follows. We thus obtain the following.
\begin{eqnarray}\label{eq:cyl-estimate}\nonumber
\H^{k_m}\!\!\left(C\cap N(xV^\perp,\bar{s} h(x))\cap B\left(x,\rho\right)\right)\!\!&\le &\!\! 2\lambda(2h(x)s)^{k_m}\\
&=&\!\! 2^{k_m+1}\!\cdot\! 2000^{4k_m}\lambda\!\cdot\! s^{-7k_m}(\bar{s}^4_1 h(x))^{k_m}.
\end{eqnarray}
\noindent
\textbf{Step 4: Concluding density estimate.}
Observe from \eqref{incl:covering}, \eqref{incl:tilde_covering} and \eqref{Claim:enlarged_cylinder} that
\[
C\subset \bigcup_{x\in D}C\cap N\left(xV^\perp,\bar{s}h(x)\right)\cap B(x,\rho).
\]
We obtain after summing the estimate in \eqref{eq:cyl-estimate} over all $x\in D$, using \eqref{ineq:sum-estimate} and taking into account that $\H^{k_m}(F\setminus C)=0$,
\[
\cH^{k_m} \left(E\cap B \left(x_0,\rho \right)\right) \leq (2^{11k_m+1}\cdot 2000^{4k_m} s^{-7k_m})\lambda\rho^{k_m}.
\]
The upper density estimate immediately follows.
\end{proof}

\section{Proof of the main result}\label{sec:main}
We now turn to the proof of Theorem~\ref{Thm:Main}. 
The argument follows a standard scheme, based on the decay estimate established in Section~\ref{sec:density}. 
Similar methods have been employed in the Euclidean setting \cite{Mat95} and more recently in homogeneous groups \cite{IMM20}. 

\begin{proof}[Proof of Theorem \ref{Thm:Main}]
By compactness, cover $\cG(\bH^n,k)$ with finitely many balls 
\[
B(V,1/3):=\{W\in\cG(\bH^n,k): \rho(V,W)<1/3\}.
\] 
Since $\cH^{k_m}\res E$ is locally finite, it suffices to consider the purely $k$-unrectifiable part $E_{\mathrm{pu}}$ of $E$ and prove that, for a fixed $V$,
\[
Z := \left\{ p \in E_{\mathrm{pu}} : {\textrm ap}Tan_\bH^k(E,p) = T_p \in B(V,1/3) \right\}
\]
is $\cH^{k_m}$-null.

Fix $p\in Z$ and let $s_0\in (0,c/3)$, where $c$ is the constant from Proposition~\ref{prop:proj-estimates}. We claim that
\begin{equation}\label{eq:cone-incl}
X(p,V^\perp,s_0)\subset \bH^n \setminus X(p,T_p,s_0).
\end{equation}
Indeed, if $q\in X(p,V^\perp,s_0)$, then by Proposition~\ref{prop:proj-estimates},
\[
d(p,q)\ge \frac{1}{s_0}d(p^{-1}q,V^\perp)\ge \frac{c}{s_0}\|\pi_{V^\perp}^{-1}(p^{-1}q)\cdot(p^{-1}q)\|
\ge \frac{c}{s_0}d(p,q)-\frac{c}{s_0}\|\pi_{V^\perp}(p^{-1}q)\|,
\]
which yields
\begin{equation}\label{eq:eps-range}
\|\pi_{V^\perp}(p^{-1}q)\|\ge \Bigl(1-\tfrac{s_0}{c}\Bigr)d(p,q).
\end{equation}
Since $\rho(V,T_p)=\rho(V^\perp,T_p^\perp)$, we have $T_p^\perp \in B(V^\perp,1/3)$, and hence
\[
\|\pi_{V^\perp}(p^{-1}q)\| - \|\pi_{T_p^\perp}(p^{-1}q)\| 
\le \|\pi_{T_p^\perp}^{-1}(p^{-1}q)\cdot \pi_{V^\perp}(p^{-1}q)\| 
\le \tfrac{1}{3}\|p^{-1}q\|.
\]
Combining with \eqref{eq:eps-range} gives
\[
\|\pi_{T_p^\perp}(p^{-1}q)\|\ge \Bigl(\tfrac{2}{3}-\tfrac{s_0}{c}\Bigr)d(p,q).
\]
Another application of Proposition~\ref{prop:proj-estimates} yields
\[
d(p^{-1}q,T_p)\ge c\Bigl(\tfrac{2}{3}-\tfrac{s_0}{c}\Bigr)d(p,q) > s_0 d(p,q),
\]
so $q\notin X(p,T_p,s_0)$, proving \eqref{eq:cone-incl}.

Now the approximate tangent subgroup condition \eqref{eq:TanConeCond} and \eqref{eq:cone-incl} imply that for any $\lambda>0$ there exists $\delta_{p,\lambda}>0$ such that
\begin{equation}\label{eq:estimFederer}
\cH^{k_m}\!\left(E\cap B(p,r)\cap X(p,V^\perp,s_0)\right)
\le \lambda s_0^{k_m} r^{k_m}, \qquad 0<r\le \delta_{p,\lambda}.
\end{equation}

For each $j\ge1$ set
\[
Z_j:=\Bigl\{q\in Z: \eqref{eq:estimFederer}\text{ holds for $p=q$ and all $0<r<3^{-j}$}\Bigr\}.
\]
Then $Z=\bigcup_{j\ge1} Z_j$. Since $\cH^{k_m}(Z_j)<\infty$, we may write 
\[
Z=Z_0\cup \bigcup_{j\ge1} Z_j', 
\]
with $\cH^{k_m}(Z_0)=0$ and $Z_j'$ closed for all $j$. By Theorem~\ref{thm:density-estimate}, every $p\in Z_j'$ satisfies
\[
\Theta^{\ast k_m}(Z_j',p) \le C(k_m,s_0)\,\lambda,
\]
where $C(k_m,s_0)=2^{11k_m+1}\cdot 2000^{4k_m}s_0^{-7k_m}$. Choosing $\lambda$ small and applying Lemma~\ref{lemma:densities} gives $\cH^{k_m}(Z_j')=0$ for all $j\ge1$. Hence $\cH^{k_m}(Z)=0$, completing the proof.
\end{proof}

\section{Application}\label{sec:applications}
A recent theorem on higher-order rectifiability in Heisenberg groups \cite{IM21}*{Theorem~1.1} provides sufficient conditions for low-codimension subsets to be \textit{${C}^{1,\alpha}$-rectifiable}. 
These conditions include, in particular, a positive lower density assumption. 
As an application of Theorem~\ref{Thm:Main}, we show that this density assumption can be removed, thereby yielding a strengthened version of the result. 
Before stating the precise formulation, we recall a few essential notions.

\subsection{\texorpdfstring{$C^{1,\alpha}$}-rectifiable sets in low codimensions}

\begin{definition}\label{Def:CH-regular}
Let $f\in \mathbf{C}_{H}^{1}(\mathcal{U})$. Suppose $\nabla_H f$ is $\alpha$-H\"{o}lder continuous with respect to the homogeneous norm, i.e.
\[
{\sup_{p,q\in \Omega, \, p\ne q}}\frac{\|\nabla_H f(p)-\nabla_H f(q)\|_{\R^{2n}}}{\|p^{-1}q\|} < \infty
\]
for some $\alpha \in (0,1]$, then we say that $f\in \mathbf{C}_{H}^{1,\alpha}(\Omega)$.
\end{definition}

\begin{definition}
Let $n+1\le k \le 2n$. A set $S \subset \bH^n$ is a $k$-dimensional $(\mathbf{C}_H^{1,\alpha},\bH)$-regular surface if for any $p \in S$ there are $\cU \subseteq \bH^n$ open and $f \in [\mathbf{C}_H^{1,\alpha}(\cU)]^{2n+1-k}$ satisfying
\begin{enumerate}[label=(\alph*), itemsep=1ex]
\item $d_H f_q$ is surjective at all $q \in \cU$;
\item $S \cap \cU = \{ q \in \cU \: : \: f(q) = 0 \}$.
\end{enumerate}
\end{definition}

\begin{definition} \label{def:rec1a}
A measurable set $E \subset \bH^n$ is {} $C^{1,\alpha}$-rectifiable if there are $k$-dimensional $(\mathbf{C}_H^{1,\alpha},\bH)$-regular surfaces $S_i$, with $i \in \N$, such that
\[
\cH^{k_m} \left(E\sm\bigcup_{i\in\N}S_i\right)=0,
\]
where $k_m=k$ if {} $1\le k \le n$ and $k_m=k+1$ if {} $n+1\le k \le 2n$.
\end{definition}

\begin{definition}\label{def.objects}
Fix $\alpha \in (0,1]$ and $\lambda >0$. The {\itshape $\alpha$-paraboloid} centered at $x\in \bH^n$ with base $V \in \cG(\bH^n)$ and parameter $\lambda$ is defined as
\[
Q_\alpha(x,V,\lambda) := \left\{ y \in \bH^n \ : \ d(x^{-1}y,V) \le \lambda d(x,y)^{1+\alpha} \right\}.
\]
\end{definition}

\begin{theorem} \label{thm.1.1} 
Fix $\alpha \in (0,1]$ and $n<k\le 2n$. Let $E \subset \bH^n$ be a $\H^{k_m}$-measurable set with $\H^{k_m}(E) < \infty$ such that for $\H^{k_m}$-a.e. $p \in E$ there are $V_p \in \cG(\bH^n,k)$ and $\lambda > 0$ such that
\beq\label{eq.paraboloidassumption}
\lim_{r \to 0^+} \frac{1}{r^{k_m}} \H^{k_m} \left( E \cap B(p,r) \setminus Q_\alpha(p,V_p,\lambda) \right) = 0.
\eeq
Then $E$ is $C^{1,\alpha}$-rectifiable in the sense of Definition \ref{def:rec1a}.
\end{theorem}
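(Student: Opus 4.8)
The plan is to deduce this from the main theorem (Theorem~\ref{Thm:Main}) together with \cite{IM21}*{Theorem~1.1}. The observation driving the argument is that the $\alpha$-paraboloid condition \eqref{eq.paraboloidassumption} is strictly stronger than the approximate tangent subgroup condition \eqref{eq:TanConeCond}: since $d(x,y)^{1+\alpha}=d(x,y)^{\alpha}\,d(x,y)$ and $\alpha>0$, the paraboloid $Q_\alpha(p,V_p,\lambda)$ is trapped inside the cone $X(p,V_p,s)$ at sufficiently small scales. Hence \eqref{eq.paraboloidassumption} forces the cone condition, Theorem~\ref{Thm:Main} yields $(k,\bH)$-rectifiability, and from rectifiability I recover the positive lower density, which is the only hypothesis of \cite{IM21}*{Theorem~1.1} not already contained in \eqref{eq.paraboloidassumption}.

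Concretely, first I would fix $p$ satisfying \eqref{eq.paraboloidassumption} with data $V_p\in\cG(\bH^n,k)$ and $\lambda>0$, and fix $0<s<1$. If $y\in Q_\alpha(p,V_p,\lambda)\cap B(p,r)$ then
\[
d(p^{-1}y,V_p)\le \lambda\, d(p,y)^{1+\alpha}=\lambda\, d(p,y)^{\alpha}\, d(p,y)\le \lambda r^{\alpha}\, d(p,y),
\]
so whenever $r\le (s/\lambda)^{1/\alpha}$ we have $\lambda r^\alpha\le s$ and therefore $Q_\alpha(p,V_p,\lambda)\cap B(p,r)\subset X(p,V_p,s)$. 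Passing to complements gives, for all such $r$,
\[
E\cap B(p,r)\setminus X(p,V_p,s)\subset E\cap B(p,r)\setminus Q_\alpha(p,V_p,\lambda),
\]
and dividing by $r^{k_m}$ and letting $r\to0^+$ turns \eqref{eq.paraboloidassumption} into \eqref{eq:TanConeCond} for every $0<s<1$. Since $n<k\le 2n$ gives $V_p\in\cG(\bH^n,k)$ and $\H^{k_m}(E)<\infty$, Theorem~\ref{Thm:Main} applies and $E$ is $(k,\bH)$-rectifiable.

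Next I would upgrade rectifiability to positive lower density. By Definition~\ref{def:rec}, up to an $\H^{k_m}$-null set $E$ is covered by $k$-dimensional $\bH$-regular surfaces $S_i$, each of which is locally Ahlfors $k_m$-regular, so that $0<\Theta_\ast^{k_m}(S_i,p)\le\Theta^{\ast k_m}(S_i,p)<\infty$ at every point (see \cite{MSSC10}). Applying the Lebesgue density theorem to the doubling measure $\H^{k_m}\res S_i$ shows that $\H^{k_m}$-a.e.\ point of $E\cap S_i$ is a density point of $E$ relative to $S_i$; at such a point $\Theta_\ast^{k_m}(E,p)\ge\Theta_\ast^{k_m}(S_i,p)>0$. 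Thus $\Theta_\ast^{k_m}(E,p)>0$ for $\H^{k_m}$-a.e.\ $p\in E$. With the paraboloid condition \eqref{eq.paraboloidassumption} in hand and the positive lower density now established, all hypotheses of \cite{IM21}*{Theorem~1.1} are met, and $E$ is $C^{1,\alpha}$-rectifiable.

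The routine reduction from paraboloid to cone is harmless; the step requiring the most care is the passage from rectifiability to positive lower density, since one must transfer the intrinsic Ahlfors regularity and density estimates of the covering surfaces $S_i$ to $E$ itself through a density-point argument, and confirm that the resulting lower bound holds in the form demanded by \cite{IM21}*{Theorem~1.1}. One should also check that the exceptional null sets arising from the rectifiable decomposition, from \eqref{eq.paraboloidassumption}, and from the density differentiation can be amalgamated into a single $\H^{k_m}$-null set, which is routine.
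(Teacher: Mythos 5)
Your proposal is correct and follows essentially the same route as the paper: paraboloids are contained in cones at small scales, so \eqref{eq.paraboloidassumption} implies \eqref{eq:TanConeCond}, Theorem~\ref{Thm:Main} gives $(k,\bH)$-rectifiability, positive lower density follows, and \cite{IM21}*{Theorem~1.1} finishes. The only (harmless) difference is in the density step: the paper simply invokes the converse result \cite{MSSC10}*{Theorem~3.15} to get $\Theta_\ast^{k_m}(E,p)>0$ a.e., whereas you re-derive it from the local Ahlfors regularity of the covering $\bH$-regular surfaces via a Lebesgue density-point argument, which amounts to reproving that part of the cited theorem.
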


\begin{proof}
An immediate consequence of the local inclusion of paraboloids in cones at small scales gives that that the approximate tangent paraboloid condition \eqref{eq.paraboloidassumption} implies the approximate tangent subgroup condition \eqref{eq:TanConeCond}. It follows from Theorem \ref{Thm:Main} that $E$ is $(k,\bH)-$rectifiable. We therefore have from \cite{MSSC10}*{Theorem 3.15} that for $\H^{k_m}$-a.e. $p \in E$ it holds that
\[
\Theta_\ast^{k_m}(E,p) > 0.
\]
With the positive lower density condition thus established, the rest of the proof follows as in \cite{IM21}*{Theorem 1.1}.
\end{proof}

\bibliographystyle{plain}
\bibliography{bibtex}

\end{document}